\newtheorem{remark}{Remark}
\newtheorem{assumption}{Assumption}
\newtheorem{theorem}{Theorem}
\newtheorem{lemma}{Lemma}
\newtheorem{corollary}{Corollary}
\newcommand{\numcoef}[1]{\num[round-precision=5,round-mode=places, scientific-notation=false, group-minimum-digits = 6]{#1}}
\DeclareSIUnit[number-unit-product = {}]\Q{~}
\DeclareSIPrefix\kilo{K}{3}
\DeclareSIPrefix\mega{M}{6}
\DeclareSIPrefix\giga{G}{9}
\DeclareSIPrefix\terra{T}{12}
\newcommand{\divergence}{\operatorname{div}}
\newcommand{\id}{\operatorname{id}}
\newcommand{\normal}{n}
\newcommand{\bilinearform}[1]{\mathcal{#1}_h}
\newcommand{\mesh}{\mathcal{T}_h}
\newcommand{\facets}{\mathcal{F}_h}
\newcommand{\facetsint}{\mathcal{F}_h^{\text{int}}}
\newcommand{\facetsext}{\mathcal{F}_h^{\text{ext}}}
\newcommand{\sumoverallelements}{\sum_{T \in \mathcal{T}_h}}
\newcommand{\sumoverallinnerfacets}{\sum_{F \in \facetsint}}
\newcommand{\sumoverallouterfacets}{\sum_{F \in \facetsext}}
\newcommand{\jumpleft}{[\![}
\newcommand{\jumpright}{]\!]}
\newcommand{\facetjump}[1]{\jumpleft #1 \jumpright_F}
\newcommand{\jumpleftDG}{[}
\newcommand{\jumprightDG}{]}
\newcommand{\jumpDG}[1]{\jumpleftDG #1 \jumprightDG_F}
\newcommand{\averageleftDG}{\{}
\newcommand{\averagerightDG}{\}_F}
\newcommand{\averageDG}[1]{\averageleftDG #1 \averagerightDG}
\newcommand{\stab}{\lambda}
\newcommand{\brokenHnormleft}{|\!|\!|}
\newcommand{\brokenHnormright}{|\!|\!|_1}
\newcommand{\brokenHnorm}[1]{\brokenHnormleft #1 \brokenHnormright}
\newcommand{\proj}{\Pi}
\newcommand{\facetproj}{\proj_F}
\newcommand{\Recon}{\mathcal{R}_{\VelSymb}}
\newcommand{\ReconHdiv}{\mathcal{R}_{\HdivSymb}}
\newcommand{\rr}{\mathbb{R}}
\newcommand{\DGSymb}{V}
\newcommand{\HdivSymb}{W}
\newcommand{\HdivSymbHODC}{W^-}
\newcommand{\FacetSymb}{F}
\newcommand{\PressureSymb}{Q}
\newcommand{\VelSymb}{U}
\newcommand{\DGVar}{w}
\newcommand{\HdivVar}{u_{\mathcal{T}}}
\newcommand{\FacetVar}{u_{\mathcal{F}}}
\newcommand{\PressureVar}{p_h}
\newcommand{\barPressureVar}{\bar{p}_h}
\newcommand{\PressureVarcon}{p^{+}_h}
\newcommand{\PressureVarEx}{p}
\newcommand{\ForceVar}{f}
\newcommand{\VelVar}{u_h}
\newcommand{\barVelVar}{\bar{u}_h}
\newcommand{\VelVarcon}{u^{+}_h}
\newcommand{\VelVarEx}{{u}}
\newcommand{\HdivVarEx}{{u}}
\newcommand{\DGTest}{z}
\newcommand{\HdivTest}{v_{\mathcal{T}}}
\newcommand{\PressureTest}{q_h}
\newcommand{\PressureTestEx}{q}
\newcommand{\VelTest}{v_h}
\newcommand{\DiscTest}{v_h}
\newcommand{\VelTestEx}{v}
\newcommand{\DGSpace}{\DGSymb_h}
\newcommand{\HdivSpace}{\HdivSymb_h}
\newcommand{\HdivSpaceHODC}{{\HdivSymbHODC_h}}
\newcommand{\FacetSpace}{\FacetSymb_h}
\newcommand{\PressureSpace}{\PressureSymb_h}
\newcommand{\VelSpace}{\VelSymb_h}
\newcommand{\VelSpacecon}{\VelSymb^{+}_h}
\newcommand{\Intcont}{\Pi^{\mathcal{C}}}
\newcommand{\IntcontElement}{\Pi^{\mathcal{C}_\text{T}}}
\newcommand{\IntcontFacet}{\Pi^{\mathcal{C}_\text{F}}}
\newcommand{\IntPressureSpace}{{\Pi}^{k-1}_{T}}
\newcommand{\Intltwocom}{{\Pi}^{k}_{T}}
\newcommand{\ConDiff}{\mathcal{E}^c}
\newcommand{\Ureg}{U_{\textrm{reg}}}
\begin{document}

\title{Hybrid Discontinuous Galerkin methods with relaxed H(div)-conformity for incompressible flows. Part II}
\thanks{Philip L. Lederer was funded by the Austrian Sicence Fund (FWF) research programm ``Taming complexity in partial differential systems'' (F65) - project ``Automated discretization in multiphysics'' (P10).}

\author{Philip L. Lederer}\address{Institute for Analysis and Scientific Computing, TU Wien, Austria; email: {\tt philip.lederer@tuwien.ac.at}}
\author{Christoph Lehrenfeld}\address{Institute for Numerical and Applied Mathematics, University of G\"ottingen, Germany; email: {\tt lehrenfeld@math.uni-goettingen.de}}
\author{Joachim Sch{\"o}berl}\address{Institute for Analysis and Scientific Computing, TU Wien, Austria; email: {\tt joachim.schoeberl@tuwien.ac.at}}

\date{}

\subjclass{35Q30, 65N12, 65N22, 65N30}
\keywords{
  Stokes equations,
  Hybrid Discontinuous Galerkin methods,
  $H(\divergence)$-conforming finite elements,
  pressure robustness,
  high order methods
}

\begin{abstract}
  The present work is the second part of a pair of papers, considering Hybrid Discontinuous Galerkin methods with relaxed H(div)-conformity. The first part mainly dealt with presenting a robust analysis with respect to the mesh size $h$ and the introduction of a reconstruction operator to restore divergence-conformity and pressure robustness using a non conforming right hand side. The aim of this part is the presentation of a high order polynomial robust analysis for the relaxed $H(\divergence)$-conforming Hybrid Discontinuous Galerkin discretization of the two dimensional Stokes problem. It is based on the recently proven polynomial robust LBB-condition for BDM elements [P. L. Lederer, J. Sch\"oberl, IMA Journal of Numerical Analysis, 2017] and is derived by a direct approach instead of using a best approximation C\'{e}a like result. We further treat the impact of the reconstruction operator on the $hp$ analysis and present a numerical investigation considering polynomial robustness. We conclude the paper presenting an efficient operator splitting time integration scheme for the Navier--Stokes equations which is based on the methods recently presented in [C. Lehrenfeld, J. Sch\"oberl, \emph{Comp. Meth. Appl. Mech. Eng.}, 361 (2016)] and includes the ideas of the reconstruction operator. 
\end{abstract}

\maketitle

 \section{Introduction and structure of the paper}
We consider the numerical solution of the unsteady incompressible Navier--Stokes equations in a velocity-pressure formulation:
\begin{equation} \label{eq:navierstokes}
  \left\{
    \begin{array}{r l @{\hspace*{0.1cm}} c l l}
      \frac{\partial}{\partial t} \VelVarEx +  \divergence ( - \nu \nabla \VelVarEx + \VelVarEx \otimes \VelVarEx + \PressureVarEx {I} ) 
      & =
      & \ForceVar \!
      & \mbox{ in } \Omega, \\
      \divergence (\VelVarEx)
      & =
      & 0
      & \mbox{ in } \Omega,
    \end{array} \right.
\end{equation}
with boundary conditions $\VelVarEx=0$ on $\Gamma_D \subset \partial \Omega$ and $(\nu \nabla \VelVarEx  - \PressureVarEx {I}) \cdot \normal = 0$ on $\Gamma_{out} = \partial \Omega \setminus \Gamma_D$. Here, $\nu = const$ is the kinematic viscosity, ${\VelVarEx}$ the velocity, $\PressureVarEx$ the pressure, and $\ForceVar$ is an external body force. For the spatial discretization of \eqref{eq:navierstokes} we use a relaxed $H(\divergence)$-conforming Hybrid Discontinuous Galerkin (HDG) finite element method which was introduced in part I, see \cite{ledlehrschoe2017relaxedpartI}.
The motivation to use only relaxed $H(\divergence)$-conformity is the optimality of the method in terms of a superconvergence effect of HDG method that we want to briefly explain.

For a superconvergent HDG method one would hope for an accurate order $k_T$ polynomial approximation on the elements (possibly after a post-processing step) when order $k_F = k_T - 1$ polynomials are involved in the inter-element communication. 
However, in $H(\divergence)$-conforming methods the order $k_T$ on each element is directly determining the polynomial approximation $k_F = k_T$ on the element interfaces (at least for the normal component).
In \cite{ledlehrschoe2017relaxedpartI} we analyzed an HDG method where the $H(\divergence)$-conformity is relaxed to allow for the HDG superconvergence property. Therein we analyzed the method with respect to the meshsize $h$ and introduced a reconstruction operator to establish pressure robustness and re-establish $H(\divergence)$-conformity.
For a detailed discussion on (relaxed) $H(\divergence)$-conformity, the impact of pressure robustness and the use of discontinuous Galerkin methods we want to refer to the literature debates in \cite[Section 1]{ledlehrschoe2017relaxedpartI} and \cite[Section 1.2]{LS_CMAME_2016}.

For the time discretization we use efficient operator splitting methods which are based on the methods presented in \cite{LS_CMAME_2016} and result in a sequence of different sub problems. The computationally most important part is the solution of a Stokes-type problem.
The main contribution of this work is a detailed high order \emph{polynomial robust} analysis of the relaxed $H(\divergence)$-conforming discretization for the Stokes problem. It is based on the recently proven polynomial robust LBB-condition for BDM elements, see \cite{LedererSchoeberl2017}.
As a byproduct of the analysis we prove that the $H(\divergence)$-conforming discretization \cite{LS_CMAME_2016} is also \emph{polynomial robust}, cf. Remark \ref{rem:byprod}.
To the best of our knowledge this is the first HDG method which is proven to be $hp$-optimal for the Stokes problem.
We also discuss the reconstruction operator presented in \cite{ledlehrschoe2017relaxedpartI} with respect to its dependence on the polynomial degree $k$.
Together with the convection sub problem in the operator splitting this finally leads to several different discretizations for the Navier--Stokes equations using a relaxed $H(\divergence)$-conforming approach. In order to validate the qualitative and quantitative aspects of the varying methods we conclude this work with several different numerical examples.\\[1ex]

{\it Structure of the paper.} After introducing some basic notation in Section \ref{hdgnse:stokes:prelim}, we present the finite element method for the Stokes problem introduced in \cite{ledlehrschoe2017relaxedpartI} in Section \ref{hdgnse:stokes} and provide a high order polynomial robust analysis in the following Section \ref{sec:polrobanastokes}. Section \ref{sec:reconstruction} then treats reconstruction operators. Several assumptions are defined and again a high order error analysis for a pressure robust Stokes discretization is discussed. We continue with Section \ref{sec:unsteadyNVS} and present the techniques for solving the unsteady Navier--Stokes equations. Finally we conclude the paper in Section \ref{sec:numex} with several numerical examples.

\section{ Preliminaries and Notation} \label{hdgnse:stokes:prelim}
We begin by introducing some preliminary notation and assumptions.
Let $\mesh$ be a shape-regular triangulation of an open bounded two dimensional domain $\Omega$ in $\mathbb{R}^2$ with a Lipschitz boundary $\Gamma$. By $h$ we denote a characteristic mesh size. $h$ can be understood as a local quantity, i.e. it can be different in different parts of the mesh due to a change in the local mesh size. The local character of $h$ will, however, not be reflected in the notation. 
The element interfaces and element boundaries coinciding with the domain boundary are denoted as \emph{facets}. The set of those facets $F$ is denoted by $\facets$ and there holds $\bigcup_{T\in \mesh} \partial T = \bigcup_{F\in \facets} F $. We separate facets at the domain boundary, exterior facets and interior facets by the sets $\facetsext$, $\facetsint$, respectively. 
For sufficiently smooth quantities we denote by
$\jumpDG{ \cdot }$ and $\averageDG{ \cdot }$ the usual jump and averaging operators across the facet $F \in \facetsint$. For $F \in \facetsext$ we define $\jumpDG{ \cdot }$ and $\averageDG{ \cdot }$ as the identity.

In this work we only consider triangulations which consists of straight triangular elements $T$.
Further, in the analysis we consider only the case of homogeneous Dirichlet boundary conditions to simplify the presentation. By $\mathcal{P}^m(F)$ and $\mathcal{P}^m(T)$ we denote the space of polynomials up to degree $m$ on a facet $F\in\facets$ and an element $T \in \mesh$, respectively.
By $H^m(\Omega)$ we denote the usual Sobolev space on $\Omega$, whereas $H^m(\mesh)$ denotes its broken version on the mesh, $H^m(\mesh) := \{v \in L^2(\Omega) : v|_T \in H^m(T) ~ \forall ~ T \in \mesh \}$.

In the discretization we introduce element unknowns which are supported on elements (in the volume) and different unknowns which are supported only on facets. We indicate this relation with a subscript $\mathcal{F}$ for unknowns supported on facets and a subscript $\mathcal{T}$ for unknowns that are only supported on volume elements, thus we have $u_h = (u_\mathcal{T},u_\mathcal{F})$. The local restrictions of the volumte part on a given arbitrary element $T \in \mesh$ is then simply denoted by $u_T:=u_\mathcal{T}|_T$. In a similar manner we also introduce the local contribution of the facet variable on a given face $F \in \facets$ by $u_F := u_\mathcal{F}|_F$. 

At several occasions we further distinguish tangential and normal directions of vector-valued functions. We therefore introduce the notation with a superscript $t$ to denote the tangential projection on a facet, $v^t = v - (v \cdot n)\cdot n \in \mathbb{R}^2$, where $n$ is the normal vector to a facet.
The index $k$ which describes the polynomial degree of the finite element approximation at many places through out the paper is an arbitrary but fixed positive integer number.

\section{Relaxed $H(\divergence)$-conforming HDG formulation of the Stokes problem} \label{hdgnse:stokes}
In this paper our main focus lies on the discretization of the viscous forces of the Navier--Stokes equations \eqref{eq:navierstokes}. The corresponding reduced model problem, the steady Stokes equations, reads as
\begin{equation} \label{eq:stokes}
  \left\{
    \begin{array}{r r l @{\hspace*{0.1cm}} c l l}
      - \nu \Delta \VelVarEx & + \nabla \PressureVarEx
      & =
      & \ForceVar \!
      & \mbox{ in } \Omega, \\
      \divergence ( \VelVarEx) &
      & =
      & 0
      & \mbox{ in } \Omega.
    \end{array} \right.
\end{equation}
The well-posed weak formulation of \eqref{eq:stokes} is : Find $(\VelVarEx,\PressureVarEx) \in [H_0^1(\Omega)]^2 \times L_0^2(\Omega)$, s.t.
\begin{equation}
  \left\{
    \begin{array}{r l l l}
      \displaystyle \int_{\Omega} \nu  {\nabla} \VelVarEx : {\nabla} \VelTestEx \, dx
      & \displaystyle  - \int_{\Omega} \divergence(\VelTestEx) \PressureVarEx \, dx
      & \displaystyle  = \langle {\ForceVar} , \VelTestEx \rangle \quad
      & \displaystyle \quad \forall~ \VelTestEx \in [H^1_0(\Omega)]^2, \\
      \displaystyle -\int_{\Omega} \divergence(\VelVarEx) \PressureTestEx \, dx
      & \displaystyle 
      & \displaystyle = 0 \quad
      & \displaystyle \quad \forall~ \PressureTestEx \in L^2_0(\Omega).
    \end{array} \right.
  \label{eq:varformstokes}
\end{equation}
Here, we have $L^2_0(\Omega) := \{ q \in L^2(\Omega) : \int_{\Omega} q\, dx = 0 \}$.
In the discretization we take special care about the treatment of the incompressibility condition which is closely related to the choice of finite element spaces. In the sequel of this section we summarize the discretization and refer to \cite{ledlehrschoe2017relaxedpartI} for more details. Later, in Section~\ref{sec:unsteadyNVS} we discuss the extension to discretizations of the Navier--Stokes equations based on operator splitting methods.

\subsection{Finite element spaces} \label{hdgnse:stokes:fes}
\subsubsection{The velocity space}
Although the velocity solution of the Stokes problem \eqref{eq:stokes} will typically be at least $H^1(\Omega)$-regular, we do not consider $H^1(\Omega)$-conforming finite elements. Instead, we base our discretization on a ``relaxed $H(\divergence)$-conforming space'', i.e. almost normal-continuous, finite elements.
We recall the definition of $H(\divergence, \Omega) := \{ v \in [L^2(\Omega)]^2 : \divergence(v) \in L^2(\Omega) \}$.
We then have the well-known \emph{$BDM_k$} space and its relaxed counterpart given by
\begin{subequations}
\begin{align} \label{eq:weaklyhdivconf}
    \HdivSpace
  & :=  \{ \HdivVar  \in \prod_{T\in\mesh} [\mathcal{P}^k(T)]^2 : \jumpDG{ \HdivVar\! \cdot\! n } = 0,\ \forall F \in \facets \} \subset H(\divergence,\Omega), \\
  \HdivSpaceHODC &:= \{ \HdivVar  \in \prod_{T\in\mesh} [\mathcal{P}^k(T)]^2: \facetproj^{k-1} \jumpDG{ \HdivVar\! \cdot\! n } = 0, \ \forall \ F \in \facets \} \not \subset H(\divergence,\Omega),
\end{align}
\end{subequations}
where for $F \in \facets$, $\facetproj^{k-1}: [L^2(F)]^2 \rightarrow [\mathcal{P}^{k-1}(F)]^2$ is the $L^2(F)$ projection into $\mathcal{P}^{k-1}(F)$:
\begin{equation} \label{eq:facetproj}
  \displaystyle  \int_F \left( \facetproj^{k-1} \ \! {w} \right) \cdot {\DiscTest} \ d {s} = \int_F {w} \cdot  {\DiscTest} \ d {s} \quad \forall \ {\DiscTest} \in [\mathcal{P}^{k-1}(F)]^2.
\end{equation}
Details on the \emph{construction} of the finite element space $\HdivSpaceHODC$ are given in \cite[Section 3]{ledlehrschoe2017relaxedpartI}.
As the space $\HdivSpaceHODC$ is not $H^1$-conforming, the tangential continuity has to be imposed weakly through a DG formulation for the viscosity terms, for instance as in \cite{cockburn2007note}. We use a hybridized version to decouple element unknowns and decrease the costs for solving the linear systems as in the case of a DG formulation. To this end we introduce the space for the facet unknowns
\begin{align}
  \FacetSpace := \{ \FacetVar \in \prod_{F \in \facets} [\mathcal{P}^{k-1}(F)]^2 : \FacetVar \cdot  n = 0,\ \forall F \in \facetsint, \FacetVar = 0 \text{ on } \partial \Omega\},
\end{align}
which is used for an approximation of the tangential trace of the velocity on the facets.
Note that we only consider polynomials up to degree $k-1$ in $\FacetSpace$ whereas we have order $k$ polynomials in $\HdivSpaceHODC$.  Further, functions in $\FacetSpace$ have normal component zero and the tangential part of the Dirichlet boundary conditions are implemented through $\FacetSpace$. For the discretization of the velocity field we use the composite space 
\begin{equation}
  \VelSpace := \HdivSpaceHODC \times \FacetSpace,
\end{equation}
and define the tangential restriction $(\cdot)^t = (\cdot) - ((\cdot) \cdot \normal) \normal$ where $\normal$ is the outer normal to a facet and further define the jump operator $\facetjump{ \VelVar^t } := u_T^t|_F - u_F,~ F \in \facetsint,~ T \in \mesh$. We notice that the jump $\facetjump{ \VelVar^t }$ is element-sided that means that it can take different values for different sides of the same facet. Further, note that due to the homogeneous Dirichlet boundary condition imposed in $\FacetSpace$ we have $\facetjump{ \VelVar^t } = u_T^t|_F$ on $F \in \facetsext$.

\subsubsection{The pressure space}
For the pressure, the appropriate finite element space to the velocity space $\HdivSpaceHODC$ is the space of piecewise polynomials which are discontinuous and of one degree less:
\begin{equation} \label{eq:pressurespace}
  \PressureSpace := \prod_{T\in\mesh} \mathcal{P}^{k-1}(T).
\end{equation}
While the pair $\HdivSpace$/$\PressureSpace$ has the property $Q_h = \divergence(\HdivSpace)$, the pair $\HdivSpaceHODC$/$\PressureSpace$ only has the local property
$
\divergence ( \HdivSpaceHODC|_T ) = \divergence ( \HdivSpace|_T ) = \PressureSpace|_T~ \forall~ T \in \mesh$.
Functions in $\HdivSpaceHODC$ are only ``almost normal-continuous'', but can be normal-discontinuous in the highest polynomial order modes. If a velocity $\HdivVar \in \HdivSpaceHODC$ is weakly incompressible, i.e.
\begin{equation} \label{eq:weakinc}
  \int_{\Omega} \divergence( \HdivVar) \, \PressureTest \, dx = 0 \ \forall \PressureTest \in \PressureSpace,
\end{equation}
we have
\begin{equation} \label{eq:stronginc2}
  \divergence(\HdivVar)=0 \text{ in } T \in \mesh \text{ and } \Pi^{k-1} \jumpDG{ u_T } = 0,~\forall~T\in\mesh,
 \end{equation}
 thus a missing normal continuity in the higher order moments. In order to  obtain mass conservative velocity fields a reconstruction is presented in Section \ref{sec:reconstruction}.
 We recall that the purpose of the relaxation of the $H(\divergence)$-conformity is computational efficiency in view of arising linear systems. This has been elaborated on in \cite{ledlehrschoe2017relaxedpartI}.
\subsection{Variational formulation}
We recall the Hybrid DG formulation presented in \cite{ledlehrschoe2017relaxedpartI} which leads to an order-optimal method with a reduced number of globally coupled unknowns. 
The (``basic'') discretization is as follows:
Find $\VelVar=(\HdivVar, \FacetVar) \in \VelSpace$ and $\PressureVar \in \PressureSpace$, s.t.
\begin{equation}\label{eq:discstokes}
  \displaystyle
  \left\{
  \begin{array}{crcrl}
    \bilinearform{A}(\VelVar,{\VelTest})
    & + \ \ \bilinearform{B}({\VelTest},\PressureVar)
    & =
    & \langle \ForceVar, \VelTest \rangle
    & \forall \ \VelTest \in \VelSpace,\\
    & \bilinearform{B}(\VelVar,\PressureTest)
    & =
    & 0
    & \forall \ \PressureTest \in \PressureSpace,
  \end{array}
      \right.
      \tag{B}
\end{equation}
with the bilinear forms corresponding to viscosity ($\bilinearform{A}$), pressure and incompressibility ($\bilinearform{B}$) defined in the following.
For the viscosity we introduce the bilinear form $\bilinearform{A}$ for $\VelVar, \VelTest \in \VelSpace$ as 
\begin{align} \label{eq:blfA}
    \bilinearform{A}(\VelVar,\VelTest) :=
    & \displaystyle \sumoverallelements \int_{T} \nu {\nabla} {\HdivVar} \! : \! {\nabla} {\HdivTest} \ d {x} - \int_{\partial T} \nu \frac{\partial {\HdivVar}}{\partial {\normal} }  \facetproj^{k-1} \facetjump{ \VelTest^t } \ d {s} \\
    & \displaystyle- \int_{\partial T} \nu \frac{\partial {\HdivTest}}{\partial {\normal} } \facetproj^{k-1} \facetjump{ \VelVar^t } \ d {s}
      + \int_{\partial T} \nu \frac{\stab k^2}{h} \facetproj^{k-1} \facetjump{ \VelVar^t }  \facetproj^{k-1} \facetjump{ \VelTest^t } \ d {s}, \nonumber
\end{align}
where $\stab$ is chosen, such that the bilinearform is coercive w.r.t. to a discrete energy norm $ \brokenHnorm{\cdot}$ on $\VelSpace$, introduced below.
The $L^2(F)$-projection $\facetproj^{k-1}$, c.f. \eqref{eq:facetproj}, realizes a reduced stabilization \cite[Section 2.2.1]{LS_CMAME_2016}, i.e. a sufficient stabilization with a reduced amount of global couplings.
The bilinear form for the pressure part and the incompressibility constraint is
\begin{equation}\label{eq:blfB}
  \bilinearform{B}({\VelVar},\PressureVar) := \sumoverallelements - \int_{T} \PressureVar 
  \divergence ({\HdivVar}) \ d {x} \quad \text{ for } \VelVar \in \VelSpace, \ \PressureVar \in \PressureSpace.
\end{equation}

We notice that in \eqref{eq:discstokes} only tangential continuity is treated by an HDG formulation. Below, we will see that the highest order normal discontinuity that is neither directly controlled by the finite element spaces nor the DG formulation introduces a consistency error in \eqref{eq:discstokes} which has to be considered in the analysis.
\section{Polynomial robust analysis for the Stokes equations} \label{sec:polrobanastokes}
In order to compare discrete velocity functions $u_h = (\HdivVar,\FacetVar) \in U_h$ with functions $u \in \Ureg := [H_0^1(\Omega) \cap H^2(\mesh)]^2$ we identify (with abuse of notation) $u$ with the tuple $(u|_T,u|_F)$ for every element $T \in \mesh,~F\in\partial T$ where $u|_F$ is to be understood in the usual trace sense (which is unique due to the $H^1(\Omega)$ regularity). For the purpose of the analysis it is convenient to introduce the big bilinearform for the saddle point problem in \eqref{eq:discstokes} for $(\VelVarEx,\PressureVarEx), (\VelTestEx,\PressureTestEx) \in (\VelSpace  + \Ureg) \times L^2(\Omega)$:
\begin{align} \label{eq:blfK}
  \bilinearform{K}((\VelVarEx,\PressureVarEx),(\VelTestEx,\PressureTestEx)) & := \bilinearform{A}(\VelVarEx,\VelTestEx) +  \bilinearform{B}(\VelVarEx,\PressureTestEx) + \bilinearform{B}(\VelTestEx,\PressureVarEx). 
\end{align}
A suitable discrete norm on $\VelSpace$ which mimics the $H^1(\Omega)$ norm and a suitable norm for the velocity pressure space $\VelSpace \times \PressureSpace$ are
\begin{equation} \label{eq:discretenorms}
  \brokenHnorm{\VelVar}^2
   \!:= \!\!\! \sumoverallelements
    \!\left\{ \! \Vert {\nabla} {\HdivVarEx}_T \Vert_T^2  + \frac{k^2}{h} \Vert \facetproj^{k-1} \facetjump{ \VelVarEx^t  } \Vert_{\partial T}^2 \right\}\!, \qquad
  \brokenHnorm{(\VelVar,\PressureVar)}
  \!:= \! \sqrt{\nu} \brokenHnorm{\VelVar}\! +\! \frac{1}{\sqrt{\nu}} \Vert \PressureVar \Vert_{L^2(\Omega)}.
\end{equation}
At several occasions in the analysis we use the notation $a \lesssim b$  for $a,b \in \rr$ to express $a \leq c \ \! b$ for a constant $c$ that is independent of $h$ and $k$. \newline
\begin{remark}
  In contrast to Definition 2.10 in \cite{ledlehrschoe2017relaxedpartI} the norms in \eqref{eq:discretenorms} use a proper scaling with respect to the polynomial order $k$. This is important for the polynomial robustness proven in the following.
\end{remark}
~\newline ~ \newline
Where standard $hp$-error estimates for continuous finite element approximations normally use a C\'{e}a-like best approximation result this does not work in the case of discontinuous Galerkin approximations as the introduced bilinear forms are only continuous with respect to $\brokenHnorm{\cdot }$ on the discrete spaces. We follow a different approach which leads to optimal error estimates with resprect to the meshsize $h$ and the polynomial order $k$. We start with the introduction of some appropriate interpolation operators and recall some interpolation estimates in the following lemma. Similar to \eqref{eq:facetproj}, we define the element-wise $L^2$ projection $\Intltwocom: [L^2(T)]^2 \rightarrow [\mathcal{P}^k(T)]^2$:
\begin{align} \label{eq::ltwoelementproj}
  \int_T \Intltwocom u \cdot \DiscTest \, dx =   \int_T u \cdot \DiscTest \, dx \quad \forall \DiscTest \in [\mathcal{P}^k(T)]^2, \quad T \in \mesh.
\end{align}
\begin{lemma}[Interpolation error estimates] \label{lem:intpolerror}
  Let $T \in \mesh$ and $u \in [H^1(\Omega) \cap H^m(\mesh)]^2$. Let $\Intltwocom$ be the interpolation operator defined by \eqref{eq::ltwoelementproj}. There holds for $s \le \min(k, m-1)$
  \begin{subequations}
  \begin{align}   
    \Vert u - \Intltwocom u \Vert_T + \sqrt{\frac{h}{k}} \Vert u - \Intltwocom u \Vert_{\partial T} &\lesssim \left( \frac{h}{k} \right)^s \| u \|_{H^{s}(T)} \label{lem:interrorthree}.
  \end{align}
Further, there exists a continuous operator $\IntcontElement$ in to the space $\HdivSpaceHODC \cap H^1(\Omega)$ with
  \begin{align}
    \Vert \nabla ( u - \IntcontElement u) \Vert_T  &\lesssim \left( \frac{h}{k} \right)^s \| u \|_{H^{1+s}(T)}.  \label{lem:interrorone}
  \end{align}
\end{subequations}
\end{lemma}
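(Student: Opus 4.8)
The plan is to prove the two estimates separately, as they concern quite different operators. For the estimate \eqref{lem:interrorthree} on the $L^2$-projection $\Intltwocom$, the interior bound $\Vert u - \Intltwocom u \Vert_T \lesssim (h/k)^s \|u\|_{H^s(T)}$ is the standard $hp$ approximation result for $L^2$-projections onto $\mathcal{P}^k(T)$ on a shape-regular simplex; I would cite this from the $hp$-FEM literature (e.g. the results of Babu\v{s}ka--Suri, or the scaled estimates in Schwab's book, or the references already used in \cite{ledlehrschoe2017relaxedpartI,LS_CMAME_2016}). First I would recall the estimate on the reference element $\widehat{T}$, where for $\hat{u} \in H^s(\widehat{T})$ one has $\Vert \hat{u} - \hat{\Pi}^k \hat{u}\Vert_{\widehat{T}} \lesssim k^{-s}\|\hat u\|_{H^s(\widehat T)}$, and then transfer to the physical element $T$ via the affine map, tracking the powers of $h$ that come from the Jacobian and from the $H^s$-seminorm scaling. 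The facet term $\sqrt{h/k}\,\Vert u - \Intltwocom u\Vert_{\partial T}$ is handled by combining a (scaled, polynomial) trace inequality $\Vert v \Vert_{\partial T}^2 \lesssim (k^2/h)\Vert v\Vert_T^2 + \|v\|_T\|\nabla v\|_T$ type bound — or more directly the $hp$ trace estimate $\Vert v\Vert_{\partial \widehat T} \lesssim k \Vert v\Vert_{\widehat T}$ for $v\in\mathcal P^k$ — applied not to $u - \Intltwocom u$ directly but after inserting an intermediate $H^s$-conforming approximation, since $u - \Intltwocom u$ is not a polynomial. A clean route is: $\Vert u - \Intltwocom u\Vert_{\partial T} \le \Vert u - \Pi u\Vert_{\partial T} + \Vert \Pi u - \Intltwocom u \Vert_{\partial T}$ where $\Pi u$ is a suitable smooth $hp$-quasi-interpolant; the first term is a continuous trace estimate, the second uses the polynomial trace inequality plus the already-established $L^2$ bound and the triangle inequality, all of which produce the factor $\sqrt{k/h}$ that cancels against the prefactor $\sqrt{h/k}$.

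For \eqref{lem:interrorone} the point is the \emph{existence} of a projection $\IntcontElement$ mapping into $\HdivSpaceHODC \cap H^1(\Omega)$ — i.e.\ into genuinely $H^1$-conforming, vector-valued, piecewise-$\mathcal P^k$ functions — together with the optimal $hp$ gradient estimate. I would invoke a standard $hp$-stable Cl\'ement/Scott--Zhang-type quasi-interpolation operator onto the continuous $\mathcal P^k$ Lagrange space (componentwise), for which the local estimate $\Vert \nabla(u - \IntcontElement u)\Vert_T \lesssim (h/k)^s \|u\|_{H^{1+s}(\omega_T)}$ on an element patch $\omega_T$ is classical; if one wants the clean form with $\|u\|_{H^{1+s}(T)}$ on the right (no patch), one can instead use the $hp$-interpolation operators with optimal convergence constructed specifically for simplices (e.g.\ Demkowicz-type or the operators in Melenk--Wohlmuth / Schwab), which are designed to give element-local bounds. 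The key observation making this operator admissible here is that continuous piecewise-$\mathcal P^k$ vector fields are in particular in $\HdivSpaceHODC$, since normal continuity (even in all modes) is implied by full continuity — so $H^1(\Omega)$-conforming $\subset \HdivSpaceHODC$.

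The main obstacle I anticipate is the \emph{$k$-explicitness}, not the $h$-dependence: ordinary finite-element interpolation estimates are stated with $k$ fixed, and one must be careful that all the constants (trace inequalities, inverse inequalities, the stability of the $L^2$-projection, the quasi-interpolation constants) are genuinely independent of $k$. For the $L^2$-projection this is automatic (it is the best approximation in $L^2$), and the $hp$ approximation results in the reference literature are already stated with the sharp $k^{-s}$ decay; for the trace term the relevant tool is the polynomial trace inequality on a simplex with the correct $k$-power, which is known to be $\Vert p\Vert_{\partial\widehat T}\lesssim k\,\Vert p\Vert_{\widehat T}$, optimal in $k$. For \eqref{lem:interrorone} the $k$-explicit version of Scott--Zhang/Cl\'ement-type stability on simplices is the one genuinely nontrivial ingredient, and here I would lean on the cited $hp$-interpolation constructions rather than reprove it. Beyond that, the proof is bookkeeping: reference-element estimate, affine scaling to collect the $h$-powers, and assembling the trace term via the triangle inequality through a polynomial intermediate.
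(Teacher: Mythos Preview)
Your plan is correct and, for the volume estimate in \eqref{lem:interrorthree} and for \eqref{lem:interrorone}, essentially coincides with the paper's proof: both simply invoke the standard $hp$-approximation literature (Babu\v{s}ka--Suri / Schwab for the $L^2$-projection, and Melenk's $H^1$-conforming $hp$-interpolation operator on simplices for the continuous operator $\IntcontElement$, with a scaling argument).

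The one place where your route differs is the boundary term in \eqref{lem:interrorthree}. The paper does not split through an auxiliary interpolant and a polynomial trace inequality; it instead cites directly the sharp trace-approximation results for the $L^2$-projection on a simplex (Chernov and related work), which give $\Vert u - \Intltwocom u\Vert_{\partial T}$ with the optimal $k$-dependence in one stroke, plus a scaling argument for the $h$-power. Your splitting $u-\Intltwocom u = (u-\Pi u)+(\Pi u-\Intltwocom u)$, with a multiplicative trace estimate on the first piece and the polynomial inverse trace inequality on the second, also works (it needs the auxiliary $\Pi$ to have both $L^2$- and $H^1$-approximation of order $(h/k)^s$ and $(h/k)^{s-1}$ in the same $H^s$-norm, which the Melenk-type operator provides), but it is more laborious and requires $s\ge 1$ for the gradient estimate on $u-\Pi u$. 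The paper's direct citation is cleaner and avoids this intermediate construction; your approach has the minor advantage of being self-contained once the auxiliary interpolant is in hand.
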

 \begin{proof}
   The estimate of the volume term of \eqref{lem:interrorthree} can be found in \cite{babuvska1987hp} and \cite[Remark 4.74]{schwab1998p}. The estimate of the boundary term follows from \cite[Corollary 1.2]{MR3163888} and \cite{chernov2012optimal} and a scaling argument. Estimate \eqref{lem:interrorone} is given in \cite[Theorem 3.32]{melenkoptint} and uses a standard scaling argument. Similar results have already been achieved using the techniques in \cite{babuvska1987hp} and proper lifting operators \cite{MR1098410} in order to adapt results on quads for triangles. 
 \end{proof}
Note that by $\IntcontElement $ we can further also introduce a continuous interpolation into $\VelSpace$ by $\Intcont u = (\IntcontElement u, \IntcontFacet u) $ with $\IntcontFacet u : = (\IntcontElement u)^t$ on $F \in \facets$.
\begin{lemma}[Consistency] \label{lem:consistency}
  Let $(\VelVarEx, \PressureVarEx) \in [H^2(\Omega)]^2\times H^1(\Omega)$ be the solution to the Stokes equation \eqref{eq:stokes}.
  There holds for $(\VelTest,\PressureTest) \in \VelSpace \times \PressureSpace$
    \begin{equation} \label{eq:consista}
      \bilinearform{K}((\VelVarEx, \PressureVarEx),(\VelTest,\PressureTest)) = \langle \ForceVar,  \HdivTest \rangle -  \ConDiff(\VelVarEx, \PressureVarEx, \VelTest),
    \end{equation}
    where
    \begin{equation*}
\ConDiff(\VelVarEx, \PressureVarEx, \VelTest) := \sum_{T\in\mesh} \int_{\partial T} (\id - \facetproj^{k-1}) (- \nu \frac{\partial \VelVarEx}{\partial n} + \PressureVarEx n) \cdot (\id - \facetproj^{k-1}) v_T \, ds.
\end{equation*}
For $(\VelVarEx, \PressureVarEx) \in  [H^1(\Omega)]^2 \cap [H^l(\mesh)]^2 \times H^{l-1}(\mesh)$,~ $l \ge 2$ and $m = \min(k,l-1)$ we further get
\begin{equation}  \label{lem:consistencyerror}
  \ConDiff(\VelVarEx, \PressureVarEx, \VelTest) \le
  \left( \frac{h}{k} \right)^m 
  \left( \nu  \|  \VelVarEx \|_{H^{m+1}(\mesh)} +  \|  \PressureVarEx \|_{H^{m}(\mesh)} \right) \brokenHnormleft \VelTest \brokenHnormright.
\end{equation}
  \end{lemma}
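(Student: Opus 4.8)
The plan is to prove the consistency identity \eqref{eq:consista} by testing the strong Stokes equations against the discrete test functions elementwise and integrating by parts, and then to estimate the resulting consistency error $\ConDiff$ using the interpolation estimate \eqref{lem:interrorthree} from Lemma~\ref{lem:intpolerror} together with polynomial trace/inverse inequalities. I would start from the first equation of \eqref{eq:stokes}, multiply by $v_T$ on each $T \in \mesh$, and integrate by parts. This produces the elementwise term $\int_T \nu \nabla \VelVarEx : \nabla v_T - \int_T \PressureVarEx \divergence(v_T)$ plus the boundary term $-\int_{\partial T} (-\nu \frac{\partial \VelVarEx}{\partial n} + \PressureVarEx n) \cdot v_T \, ds$. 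Summing over elements, the volume terms assemble exactly into $\bilinearform{A}$'s first volume contribution plus the $\bilinearform{B}(\VelTest,\PressureVarEx)$ contribution. For the $\bilinearform{B}(\VelVarEx,\PressureTest)$ term I use that $\divergence(\VelVarEx)=0$ exactly, so it vanishes. The remaining work is to match the boundary terms: since $\VelVarEx \in [H^2(\Omega)]^2$, its tangential trace is single-valued and $\VelVarEx \cdot n$ is continuous, so $\facetjump{\VelVarEx^t} = 0$ in the relevant sense and the $\bilinearform{A}$-terms involving $\facetjump{\VelVarEx^t}$ on the discrete argument vanish; meanwhile the flux $-\nu \frac{\partial \VelVarEx}{\partial n} + \PressureVarEx n$ is single-valued across interior facets, so on summing, the only surviving boundary discrepancy is exactly $\int_{\partial T}(-\nu \partial_n \VelVarEx + \PressureVarEx n)\cdot v_T\, ds$ minus the term $\int_{\partial T} \nu \partial_n \VelVarEx \, \facetproj^{k-1} \facetjump{\VelTest^t}\,ds$ appearing in $\bilinearform{A}$. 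Writing $v_T = \facetproj^{k-1} v_T + (\id - \facetproj^{k-1}) v_T$ and using that the flux's action against $\facetproj^{k-1} v_T$ combines with the $\facetproj^{k-1}\facetjump{\VelTest^t}$ term (and the normal components cancel because $v_F \cdot n = 0$ and $\VelVarEx\cdot n$ is continuous) leaves precisely $\sum_T \int_{\partial T} (\id - \facetproj^{k-1})(-\nu\partial_n\VelVarEx + \PressureVarEx n)\cdot(\id - \facetproj^{k-1})v_T\,ds$, using the self-adjointness and idempotency of the $L^2(F)$-projection $\facetproj^{k-1}$ to move one projection onto the flux. This establishes \eqref{eq:consista}.

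For the bound \eqref{lem:consistencyerror}, the key observation is that $(\id - \facetproj^{k-1})$ annihilates polynomials of degree $\le k-1$ on each facet, so I can subtract an arbitrary elementwise polynomial approximation of the flux before applying Cauchy--Schwarz on $\partial T$. Concretely I bound
$$
\ConDiff \le \sum_T \| (\id - \facetproj^{k-1})(-\nu \partial_n \VelVarEx + \PressureVarEx n)\|_{\partial T} \, \|(\id - \facetproj^{k-1}) v_T\|_{\partial T}.
$$
For the second factor, $(\id - \facetproj^{k-1})v_T = (\id - \facetproj^{k-1})\facetjump{\VelTest^t}$ up to the normal part handled as above, which is controlled by $\sqrt{h/k}\,\|\facetproj^{k-1}\facetjump{\VelTest^t}\|_{\partial T}$ — wait, more carefully: I bound $\|(\id-\facetproj^{k-1})v_T\|_{\partial T}$ using a trace inequality against $\|\nabla v_T\|_T$ plus the controlled jump term; one gets a factor comparable to $\sqrt{h/k}$ times the local energy norm, so this factor contributes $\sqrt{h/k}\,\localnormleft\VelTest\localnormright$-type quantities whose sum is $\sqrt{h/k}\,\brokenHnormleft\VelTest\brokenHnormright$. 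For the first factor, I insert $\Intltwocom$ applied to the flux (or better, an $L^2(T)$ projection of degree $k-1$ on each component), use that $\facetproj^{k-1}$ is an $L^2(F)$-bounded projection so $\|(\id - \facetproj^{k-1})g\|_{\partial T} \le \|g - \Pi^{k-1}_T g\|_{\partial T}$ for any elementwise polynomial, and invoke the boundary interpolation estimate in \eqref{lem:interrorthree} to get $(h/k)^{m-1/2}$ decay — but matched against the $\sqrt{h/k}$ from the test factor this yields the stated $(h/k)^m$. The regularity $\VelVarEx \in H^{m+1}(\mesh)$, $\PressureVarEx \in H^m(\mesh)$ supplies exactly the norms appearing on the right-hand side after noting $\partial_n \VelVarEx$ costs one derivative of $\VelVarEx$.

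The main obstacle I anticipate is the bookkeeping in the first part: carefully tracking how the normal components of $v_T$ interact with the continuity of $\VelVarEx \cdot n$ and with the constraint $v_F \cdot n = 0$, and verifying that the $\PressureVarEx n$ contribution against $\facetproj^{k-1}\facetjump{\VelTest^t}$ indeed cancels so that only the $(\id - \facetproj^{k-1})$-filtered flux survives — this requires using $\divergence(\HdivSpaceHODC|_T)=\PressureSpace|_T$ or, more elementarily, that $\PressureVarEx n \cdot$ (tangential part) integrated against the projected tangential jump telescopes since the normal flux of $\VelVarEx$ is continuous across interior facets and zero-normal-component test facet functions see no jump in the normal direction. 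Once the algebraic identity is pinned down, the estimate \eqref{lem:consistencyerror} is a routine application of Lemma~\ref{lem:intpolerror} and $hp$ trace inequalities; the only subtlety there is ensuring the power of $h/k$ is sharp, which forces the precise pairing of the $\sqrt{h/k}$ boundary factors described above.
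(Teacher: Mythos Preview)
Your plan is correct and matches the paper's approach: the identity \eqref{eq:consista} follows from elementwise integration by parts exactly as you outline (the paper simply cites Lemma~4.1 of Part~I for this step), and the estimate \eqref{lem:consistencyerror} is obtained by Cauchy--Schwarz on $\partial T$, bounding the flux factor via \eqref{lem:interrorthree} as $(h/k)^{m-1/2}$ and the test-function factor by $\|(\id-\facetproj^{k-1})v_T\|_{\partial T} \lesssim (h/k)^{1/2}\|\nabla v_T\|_T$. Your self-corrected route for the latter---a trace/approximation bound against $\|\nabla v_T\|_T$ rather than the projected jump---is exactly what the paper uses.
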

  \begin{proof}
    With the same arguments as in the proof of Lemma 4.1 in \cite{ledlehrschoe2017relaxedpartI} we arrive at
  \begin{align*}
    \int_{\partial T}
    & (\id - \facetproj^{k-1}) (- \nu \frac{\partial \VelVarEx}{\partial n} + \PressureVarEx n) \cdot (\id - \facetproj^{k-1}) \HdivTest \, ds \\
    &\le \left( \nu \Vert (\id - \facetproj^{k-1}) \nabla \VelVarEx \Vert_{\partial T} + \Vert (\id - \facetproj^{k-1}) \PressureVarEx \Vert_{\partial T} \right) \ \Vert (\id - \facetproj^{k-1}) \HdivTest \Vert_{\partial T} \\
    &\lesssim \left(\frac{h}{k}\right)^{m-1/2} \left( \nu \Vert \nabla \VelVarEx \Vert_{H^m(T)} + \Vert \PressureVarEx \Vert_{H^m(T)} \right) \ \left(\frac{h}{k}\right)^{\frac12} \Vert \nabla v_t \Vert_T.
  \end{align*}
  \end{proof}

  \begin{remark}[Consistency analysis]
    We notice that on $\VelSpace$ the bilinear form $\bilinearform{A}(\cdot,\cdot)$ is unchanged if the projection $\facetproj^{k-1}$ is removed in the second and the third integral. 
    This has been exploited in the analysis for $H(\divergence)$-conforming HDG discretizations of Biot's consolidation model and a coupled Darcy-Stokes problem in \cite{Fu2018,fu2018high}.
    For the relaxed $H(\divergence)$-conforming HDG method, one could similarly replace the bilinear form $\bilinearform{K}(\cdot,\cdot)$ with a different bilinear form $\bilinearform{K}^\ast(\cdot,\cdot)$ that is defined on $(\VelSpace  + \Ureg) \times L^2(\Omega)$ with the following two properties. On the one hand it has -- instead of \eqref{eq:consista} -- the consistency property
    $\bilinearform{K}^\ast((\VelVarEx, \PressureVarEx),(\VelTest,\PressureTest)) = \langle \ForceVar,  \HdivTest \rangle$ for all $(\VelTest,\PressureTest) \in \VelSpace \times \PressureSpace$,
where $(\VelVarEx, \PressureVarEx) \in [H^2(\Omega)]^2\times H^1(\Omega)$ is the solution to the Stokes equation \eqref{eq:stokes}. On the other hand the restriction of $\bilinearform{K}^\ast(\cdot,\cdot)$ to $U_h \times Q_h$ coincides with $\bilinearform{K}(\cdot,\cdot)$. We decided not to introduce this additional bilinear form $\bilinearform{K}^\ast(\cdot,\cdot)$ in the analysis here as it is rather artificial in view of the definition of the method and its implementation. We however mention that although the analysis with $\bilinearform{K}^\ast(\cdot,\cdot)$ results in an improved consistency property, the final error bound, cf. Theorem \ref{theorem:bestapprox} below, would remain unaffected.
  \end{remark}
  
\begin{lemma}[Coercivity] \label{lem:coercivity}
 There exists a positive stabilization parameter $\stab > 0$ such that
\begin{equation*}
  \bilinearform{A}(\VelVar,\VelVar) \gtrsim  c_{{}_{CO}} \nu \brokenHnormleft \VelVar \brokenHnormright^2 \quad \forall \VelVar \in \VelSpace,
\end{equation*}
 where $c_{{}_{CO}} > 0$ with $c_{{}_{CO}} \neq c_{{}_{CO}}(h,k)$.
\end{lemma}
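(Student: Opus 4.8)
The plan is to expand $\bilinearform{A}(\VelVar,\VelVar)$, isolate the indefinite cross term, and absorb it into the two square terms by means of a $k$-explicit polynomial trace inequality together with Young's inequality. Writing $u_\mathcal{T}|_T = u_T$ elementwise and abbreviating $\pi_T := \facetproj^{k-1} \facetjump{\VelVar^t}$, the two off-diagonal integrals coincide for $\VelTest = \VelVar$, so that
\begin{equation*}
  \bilinearform{A}(\VelVar,\VelVar) = \sumoverallelements \left\{ \nu \Vert \nabla u_T \Vert_T^2 - 2\nu \int_{\partial T} \frac{\partial u_T}{\partial n} \cdot \pi_T \, ds + \nu \frac{\stab k^2}{h} \Vert \pi_T \Vert_{\partial T}^2 \right\}.
\end{equation*}
It then suffices to bound the middle term on each element by a small multiple of the first term plus a controlled multiple of the third.

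The decisive ingredient is a polynomial trace (inverse) estimate on a straight triangle $T$ of diameter $\sim h$: for every matrix-valued polynomial $q$ of degree $\le k-1$ there holds $\Vert q \Vert_{\partial T}^2 \le C_{\mathrm{tr}} \, \tfrac{k^2}{h} \Vert q \Vert_T^2$ with $C_{\mathrm{tr}}$ independent of $h$ and $k$ — this is precisely the reason the factor $k^2/h$ enters both the stabilization and the norm $\brokenHnorm{\cdot}$. Applying it to $q = \nabla u_T$ (which has degree $\le k-1$ since $u_T \in [\mathcal{P}^k(T)]^2$) yields $\Vert \nabla u_T \Vert_{\partial T} \le \sqrt{C_{\mathrm{tr}}} \, \tfrac{k}{\sqrt h} \Vert \nabla u_T \Vert_T$. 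Combining this with a facetwise Cauchy--Schwarz inequality, $\big| \int_{\partial T} \tfrac{\partial u_T}{\partial n} \cdot \pi_T \, ds \big| \le \Vert \tfrac{\partial u_T}{\partial n} \Vert_{\partial T} \Vert \pi_T \Vert_{\partial T} \le \Vert \nabla u_T \Vert_{\partial T} \Vert \pi_T \Vert_{\partial T}$, and Young's inequality with a free parameter $\varepsilon > 0$, I obtain
\begin{equation*}
  2\nu \Vert \nabla u_T \Vert_{\partial T} \Vert \pi_T \Vert_{\partial T} \le \varepsilon \, \nu \Vert \nabla u_T \Vert_T^2 + \frac{C_{\mathrm{tr}}}{\varepsilon} \, \nu \frac{k^2}{h} \Vert \pi_T \Vert_{\partial T}^2,
\end{equation*}
and hence $\bilinearform{A}(\VelVar,\VelVar) \ge \sumoverallelements (1-\varepsilon)\nu\Vert \nabla u_T \Vert_T^2 + (\stab - C_{\mathrm{tr}}/\varepsilon)\nu\tfrac{k^2}{h}\Vert \pi_T \Vert_{\partial T}^2$.

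To finish I would fix, say, $\varepsilon = \tfrac12$ and then choose any $\stab > 2 C_{\mathrm{tr}}$; both prefactors are then strictly positive, and setting $c_{{}_{CO}} := \min\{1-\varepsilon,\ \stab - C_{\mathrm{tr}}/\varepsilon\} > 0$ gives $\bilinearform{A}(\VelVar,\VelVar) \ge c_{{}_{CO}} \nu \brokenHnorm{\VelVar}^2$, with $c_{{}_{CO}}$ manifestly independent of $h$ and $k$ since $C_{\mathrm{tr}}$ is. The $L^2(F)$-projection $\facetproj^{k-1}$ causes no difficulty here: it only occurs through the quantity $\pi_T$ which is itself part of the norm, and the sole (implicit) property used is its $L^2$-stability.

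The only non-routine step, and hence the main obstacle, is the $k$-explicit polynomial trace inequality on triangles with a constant uniform in $k$; everything else is the classical symmetric-interior-penalty coercivity argument. This sharp $hp$ trace estimate is a known result (Warburton--Hesthaven type, or via the scaled inverse inequalities used in the $hp$-literature), so the proof reduces to citing it and carrying out the Young's-inequality bookkeeping above.
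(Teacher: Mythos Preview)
Your proof is correct and follows exactly the approach the paper indicates: the paper's own proof is a one-line reference to the standard symmetric-interior-penalty coercivity argument from Part~I, with the only new ingredient being the $k$-explicit trace (inverse) inequality $\Vert q \Vert_{\partial T}^2 \lesssim \tfrac{k^2}{h}\Vert q \Vert_T^2$ for polynomials, which is precisely the step you single out. Your Cauchy--Schwarz/Young bookkeeping is the standard execution of that argument.
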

\begin{proof}
  The proof follows with the same steps as in \cite{ledlehrschoe2017relaxedpartI} with the proper $k$ scaling of the inverse inequality.
\end{proof}

\begin{lemma}[LBB] \label{lem:lbb}
There exists a constant $c_{{}_{LBB}} > 0$ with $c_{{}_{LBB}} \neq c_{{}_{LBB}}(h,k)$ such that
\begin{equation}\label{eq:lbb}
  \sup_{\VelVar \in \VelSpace} \frac{\bilinearform{B}(\VelVar,\PressureVar)}{\brokenHnormleft \VelVar \brokenHnormright} \geq  \sup_{\VelVar \in \HdivSpace \times \FacetSpace} \frac{\bilinearform{B}(\VelVar,\PressureVar)}{\brokenHnormleft \VelVar \brokenHnormright} \geq c_{{}_{LBB}} \Vert \PressureVar \Vert_{L^2}, \quad \forall \ \PressureVar \in \PressureSpace.
\end{equation}
\end{lemma}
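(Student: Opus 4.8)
The statement contains two inequalities. The first one is trivial: if $\jumpDG{\HdivVar\!\cdot\!n}=0$ on a facet then also $\facetproj^{k-1}\jumpDG{\HdivVar\!\cdot\!n}=0$, hence $\HdivSpace\subseteq\HdivSpaceHODC$ and therefore $\HdivSpace\times\FacetSpace\subseteq\VelSpace$; a supremum over a subspace cannot exceed the supremum over the whole space. Everything thus reduces to the second, polynomial‑robust inf‑sup estimate on the $H(\divergence)$‑conforming subspace $\HdivSpace\times\FacetSpace$.

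For the second inequality I would fix $\PressureVar\in\PressureSpace$ with vanishing mean value (the case relevant in view of \eqref{eq:discstokes} and \eqref{eq:varformstokes}; note that $\bilinearform{B}$ is insensitive to constant pressure modes and to the facet component of the velocity). By the recently established polynomial‑robust stability of the $BDM_k/\PressureSpace$ pair \cite{LedererSchoeberl2017} — which, together with the classical mesh‑independent inf‑sup condition on $\Omega$, furnishes a right inverse of the discrete divergence that is stable in the $k$‑weighted broken‑$H^1$ energy norm — there is $\HdivVar\in\HdivSpace$ with $\divergence(\HdivVar)=-\PressureVar$ and
\[
  \sum_{T\in\mesh}\Big( \|\nabla \HdivVar\|_T^2 + \frac{k^2}{h}\,\big\|\facetproj^{k-1}\jumpDG{\HdivVar^t}\big\|_{\partial T}^2 \Big) \ \lesssim\ \|\PressureVar\|_{L^2(\Omega)}^2 ,
\]
with constant independent of $h$ and $k$ (the left‑hand norm being the DG energy norm underlying $\brokenHnorm{\cdot}$; should \cite{LedererSchoeberl2017} instead control the full, unprojected tangential jump, this only strengthens the bound). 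Since $\PressureVar|_T\in\mathcal{P}^{k-1}(T)$, testing with $\PressureVar$ gives $\bilinearform{B}((\HdivVar,\FacetVar),\PressureVar) = -\sum_{T\in\mesh}\int_T\PressureVar\,\divergence(\HdivVar)\,dx = \|\PressureVar\|_{L^2(\Omega)}^2$ for every admissible facet component $\FacetVar$.

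It remains to choose $\FacetVar$ and to estimate $\brokenHnorm{(\HdivVar,\FacetVar)}$. I would set $\FacetVarF:=\facetproj^{k-1}\averageDG{\HdivVar^t}$ on $F\in\facetsint$ and $\FacetVarF:=0$ on $F\in\facetsext$; this is tangential and vanishes on $\partial\Omega$, hence $\FacetVar\in\FacetSpace$. With this choice a short computation gives $\facetproj^{k-1}\facetjump{\VelVar^t}=\tfrac12\,\facetproj^{k-1}\jumpDG{\HdivVar^t}$ on every interior facet and $\facetproj^{k-1}\facetjump{\VelVar^t}=\facetproj^{k-1}\HdivVar^t$ on every exterior facet (using that $\jumpDG{\cdot}$ is the identity there), so that $\sum_{T\in\mesh}\tfrac{k^2}{h}\|\facetproj^{k-1}\facetjump{\VelVar^t}\|_{\partial T}^2\le\sum_{T\in\mesh}\tfrac{k^2}{h}\|\facetproj^{k-1}\jumpDG{\HdivVar^t}\|_{\partial T}^2$. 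Combining with the displayed stability bound yields $\brokenHnorm{(\HdivVar,\FacetVar)}\lesssim\|\PressureVar\|_{L^2(\Omega)}$, whence
\[
  \sup_{\VelVar\in\HdivSpace\times\FacetSpace}\frac{\bilinearform{B}(\VelVar,\PressureVar)}{\brokenHnorm{\VelVar}}\ \ge\ \frac{\bilinearform{B}((\HdivVar,\FacetVar),\PressureVar)}{\brokenHnorm{(\HdivVar,\FacetVar)}}\ \gtrsim\ \|\PressureVar\|_{L^2(\Omega)},
\]
which is the asserted bound with an $h,k$‑independent constant $c_{{}_{LBB}}$.

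\textbf{Main obstacle.} The entire difficulty is concentrated in the imported ingredient: a divergence‑preserving interpolation (equivalently, a right inverse of the discrete divergence) into the $H(\divergence)$‑conforming $BDM_k$ space whose $k$‑weighted broken‑$H^1$ norm is bounded by the $H^1$‑norm of its argument with a constant independent of both $h$ and $k$. This is precisely the polynomial‑robust LBB of \cite{LedererSchoeberl2017}, and it is exactly what a classical ($k$‑agnostic) Fortin argument cannot deliver: the tangential jump of a normal‑continuous $BDM_k$ function is not controlled by its broken gradient, so the right inverse must itself be built with control of the ($k^2/h$‑weighted) tangential jumps. Everything else — the trivial subspace inclusion, the mean‑value reduction, and the choice of facet unknown that transfers the DG energy estimate to the hybridised norm $\brokenHnorm{\cdot}$ — is routine bookkeeping.
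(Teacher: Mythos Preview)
Your proposal is correct and follows essentially the same route as the paper: invoke the polynomial-robust $BDM_k$ inf--sup of \cite{LedererSchoeberl2017} to obtain $\HdivVar\in\HdivSpace$, set $\FacetVarF=\facetproj^{k-1}\averageDG{\HdivVar^t}$ on interior facets (zero on exterior ones), and compare the DG energy norm with the hybridised norm $\brokenHnorm{\cdot}$. The only cosmetic difference is that you phrase the imported ingredient as a bounded right inverse of the discrete divergence ($\divergence\HdivVar=-\PressureVar$ with a norm bound), whereas the paper quotes it directly in inf--sup form; as you note, the DG norm in \cite{LedererSchoeberl2017} controls the full (unprojected) tangential jump, which indeed only strengthens your displayed estimate.
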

\begin{proof}
  The proof is deduced from the key result in \cite{LedererSchoeberl2017}. Due to that result we know that there exists a constant $c_{LBB}$ so that to every $\PressureVar \in \PressureSpace$ there is an $H(\divergence)$-conforming $\HdivVar \in \HdivSpace$ which provides the estimate
  \begin{equation}
\int_{\Omega} \divergence(\HdivVar) \PressureVar \ dx \ \geq \ c_{LBB} \ \Vert p \Vert_{L^2(\Omega)}\  \brokenHnormleft \HdivVar \brokenHnormright^{DG}
  \end{equation}
  with 
  $$
  \brokenHnormleft \HdivVar \brokenHnormright^{DG} :=
   \sumoverallelements
   \Vert {\nabla} {\HdivVar} \Vert_T^2 + \sumoverallinnerfacets \frac{k^2}{h} \Vert \jumpDG{ \HdivVar^t  } \Vert_{F}^2 + \sumoverallouterfacets \frac{k^2}{h} \Vert \HdivVar^t \Vert_{F}^2 .
   $$
  We now choose $\FacetVar := \facetproj^{k-1} \averageDG{\HdivVar^t}$ on interior facets and $\FacetVar = 0$ on exterior facets. Let $\mathcal{T}(F)$ be the set of the two neighboring elements to an interior facet. Then, we have
  \begin{align*}
    \sumoverallinnerfacets \frac{k^2}{h}  \Vert \jumpDG{ \HdivVar^t  } \Vert_{F}^2
  +  \sumoverallouterfacets \frac{k^2}{h} \Vert \HdivVar^t \Vert_{F}^2 
    &\geq \sumoverallinnerfacets \sum_{T \in \mathcal{T}(F)} \frac{2 k^2}{h} \Vert \facetjump{ \HdivVar^t  } \Vert_{F}^2
  +  \sumoverallouterfacets \frac{k^2}{h} \Vert \facetjump{\HdivVar^t} \Vert_{F}^2 \\
  & \geq
    \sumoverallelements \frac{k^2}{h} \Vert \facetjump{\HdivVar^t} \Vert_{\partial T}^2
    \geq
    \sumoverallelements \frac{k^2}{h} \Vert \facetproj^{k-1} \facetjump{\HdivVar^t} \Vert_{\partial T}^2    
  \end{align*}
  and thus $\brokenHnormleft \HdivVar \brokenHnormright^{DG} \geq \brokenHnormleft (\HdivVar,\FacetVar) \brokenHnormright =  \brokenHnormleft \VelVar \brokenHnormright $. Note that $u_T \in \HdivSpace \subset \HdivSpaceHODC$. Hence, there holds
  \begin{equation*}
  \sup_{\VelVar \in \VelSpace} \frac{\bilinearform{B}(\VelVar,\PressureVar)}{\brokenHnormleft \VelVar \brokenHnormright} \geq   \sup_{\VelVar \in \HdivSpace \times \FacetSpace} \frac{\bilinearform{B}(\VelVar,\PressureVar)}{\brokenHnormleft \VelVar \brokenHnormright}
  \geq \sup_{\HdivVar \in \HdivSpace} \frac{ \int_{\Omega} \divergence(\HdivVar) \ \PressureVar \, dx }{\brokenHnormleft \HdivVar \brokenHnormright^{DG}}
  \geq c_{{}_{LBB}} \Vert \PressureVar \Vert_{L^2}, \quad \forall \ \PressureVar \in \PressureSpace.
\end{equation*}
\end{proof}

\begin{corollary} [inf-sup of $\bilinearform{K}$ ] \label{cor:infsup}
  There exists a constant $c_{{}_{IS}} > 0$ with $c_{{}_{IS}} \neq c_{{}_{IS}}(h,k)$ such that
\begin{equation*}
      \inf_{\substack{(\VelVar,\PressureVar) \in \VelSpace \times \PressureSpace \\ (\VelVar,\PressureVar) \neq 0}} \sup_{\substack{(\VelTest,\PressureTest) \in \VelSpace \times \PressureSpace\\ (\VelTest,\PressureTest) \neq 0} } \frac{\bilinearform{K}((\VelVar,\PressureVar),(\VelTest,\PressureTest))}
      {
        \brokenHnorm{(\VelVar,\PressureVar)}
        \brokenHnorm{(\VelTest,\PressureTest)}
      } \geq c_{{}_{IS}}. 
\end{equation*}
\end{corollary}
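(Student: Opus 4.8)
The plan is to follow the classical constructive saddle-point argument (in the spirit of Brezzi's theory), taking advantage of the fact that — in contrast to the usual situation — the viscosity form $\bilinearform{A}$ is coercive on the \emph{whole} space $\VelSpace$ by Lemma \ref{lem:coercivity}, not merely on the kernel of $\bilinearform{B}$. Fix $(\VelVar,\PressureVar) \in \VelSpace \times \PressureSpace$ with $(\VelVar,\PressureVar)\neq 0$. If $\PressureVar = 0$ put $w_p := 0$; otherwise Lemma \ref{lem:lbb} furnishes, after rescaling, a supremizer $w_p \in \VelSpace$ with $\brokenHnorm{w_p} = \Vert\PressureVar\Vert_{L^2(\Omega)}$ and $\bilinearform{B}(w_p,\PressureVar) \ge c_{LBB}\,\Vert\PressureVar\Vert_{L^2(\Omega)}^2$. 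As test function I would take $(\VelTest,\PressureTest) := \big(\VelVar + \tfrac{\delta}{\nu} w_p,\,-\PressureVar\big) \in \VelSpace\times\PressureSpace$ for a fixed $\delta>0$ to be pinned down at the end; note that $(\VelTest,\PressureTest)\neq 0$ whenever $(\VelVar,\PressureVar)\neq 0$.

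Plugging this pair into $\bilinearform{K}$ and using bilinearity, the two contributions $\pm\,\bilinearform{B}(\VelVar,\PressureVar)$ cancel, leaving
\begin{equation*}
  \bilinearform{K}((\VelVar,\PressureVar),(\VelTest,\PressureTest)) = \bilinearform{A}(\VelVar,\VelVar) + \frac{\delta}{\nu}\,\bilinearform{A}(\VelVar,w_p) + \frac{\delta}{\nu}\,\bilinearform{B}(w_p,\PressureVar).
\end{equation*}
To the first term I apply Lemma \ref{lem:coercivity}, to the last the LBB lower bound above, and to the sign-indefinite middle term the (polynomial-robust) continuity bound $|\bilinearform{A}(\VelVar,w_p)| \le C\,\nu\,\brokenHnorm{\VelVar}\,\brokenHnorm{w_p} = C\,\nu\,\brokenHnorm{\VelVar}\,\Vert\PressureVar\Vert_{L^2(\Omega)}$, followed by Young's inequality to absorb half of the coercive term. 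This gives
\begin{equation*}
  \bilinearform{K}((\VelVar,\PressureVar),(\VelTest,\PressureTest)) \ge \frac{c_{CO}\nu}{2}\,\brokenHnorm{\VelVar}^2 + \frac{\delta}{\nu}\Big(c_{LBB} - \frac{C^2}{2\,c_{CO}}\,\delta\Big)\Vert\PressureVar\Vert_{L^2(\Omega)}^2,
\end{equation*}
so choosing $\delta := c_{CO}c_{LBB}/C^2$ makes the bracket equal to $c_{LBB}/2$, whence the right-hand side is bounded from below by $c_1\big(\nu\brokenHnorm{\VelVar}^2 + \nu^{-1}\Vert\PressureVar\Vert_{L^2(\Omega)}^2\big)$ with $c_1>0$ depending only on $c_{CO},c_{LBB},C$. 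Since $\brokenHnorm{(\VelVar,\PressureVar)}^2 \le 2\big(\nu\brokenHnorm{\VelVar}^2 + \nu^{-1}\Vert\PressureVar\Vert_{L^2(\Omega)}^2\big)$ by \eqref{eq:discretenorms}, this controls $\bilinearform{K}$ from below by $\brokenHnorm{(\VelVar,\PressureVar)}^2$ up to a constant. For the denominator, the triangle inequality together with the scaling in \eqref{eq:discretenorms} and $\brokenHnorm{w_p}=\Vert\PressureVar\Vert_{L^2(\Omega)}$ yields $\brokenHnorm{(\VelTest,\PressureTest)} \le \sqrt{\nu}\,\brokenHnorm{\VelVar} + (\delta+1)\,\nu^{-1/2}\Vert\PressureVar\Vert_{L^2(\Omega)} \le (\delta+1)\,\brokenHnorm{(\VelVar,\PressureVar)}$. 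Combining the two estimates gives the asserted inf-sup inequality with $c_{IS} \sim c_1/(\delta+1)$, a quantity built only from $c_{CO}$, $c_{LBB}$ and $C$ and therefore independent of $h$ and $k$.

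The one ingredient not spelled out in the excerpt is the polynomial-robust continuity of $\bilinearform{A}$ on $\VelSpace$, i.e. $|\bilinearform{A}(\VelVar,\VelTest)| \lesssim \nu\,\brokenHnorm{\VelVar}\,\brokenHnorm{\VelTest}$ with an $(h,k)$-independent constant; I expect establishing this to be the main, if routine, technical point. It follows by Cauchy--Schwarz applied term-by-term to \eqref{eq:blfA}, using the $hp$ inverse trace inequality $\Vert q\Vert_{\partial T}^2 \lesssim \tfrac{k^2}{h}\Vert q\Vert_T^2$ for degree-$k$ polynomials on a shape-regular triangle (the same inequality already invoked in the proof of Lemma \ref{lem:coercivity}) together with the $L^2(F)$-stability of $\facetproj^{k-1}$; since this is exactly the ``proper $k$ scaling'' underlying the coercivity proof, it introduces no genuinely new obstacle. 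An alternative would be to quote an abstract saddle-point stability theorem, but the direct construction above keeps the constants transparently $(h,k)$-independent.
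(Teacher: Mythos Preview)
Your proof is correct and is essentially the constructive argument underlying the abstract Brezzi stability result; the paper's own proof simply reads ``This follows from Lemma \ref{lem:coercivity} and Lemma \ref{lem:lbb}'', i.e.\ it invokes exactly the same two ingredients and leaves the (standard) saddle-point argument implicit. Your observation that the polynomial-robust continuity of $\bilinearform{A}$ on $\VelSpace$ is needed but not separately stated is accurate, and your sketch of how to obtain it via Cauchy--Schwarz and the $hp$ inverse trace inequality is the right one.
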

\begin{proof}
This follows from Lemma \ref{lem:coercivity} and Lemma \ref{lem:lbb}.
\end{proof}

\begin{theorem}[Error bound] \label{theorem:bestapprox}
Let $(\VelVarEx, \PressureVarEx) \in  [H^1(\Omega)]^2 \cap [H^l(\mesh)]^2 \times H^{l-1}(\mesh)$,~$l\ge 2$ be the exact solution of~\eqref{eq:stokes} and $(\VelVar,\PressureVar) \in \VelSpace \times \PressureSpace$ be the solution of the discrete problem \eqref{eq:discstokes}. For $m = \min(k,l-1)$ there holds the error estimate
\begin{equation*}
 \nu \brokenHnormleft \VelVarEx - \VelVar  \brokenHnormright + \| \PressureVarEx - \PressureVar \|_0 \lesssim  \nu \left( \frac{h}{k} \right)^m \|  \VelVarEx \|_{H^{m+1}(\mesh)} + \left( \frac{h}{k} \right)^m \|  \PressureVarEx \|_{H^{m}(\mesh)} 
\end{equation*}
\end{theorem}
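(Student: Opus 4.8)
The plan is to run a Strang-type argument that avoids any C\'ea-like best-approximation estimate (which is not available since $\bilinearform{A}$ is continuous only on the discrete space) and instead combines the discrete inf-sup stability of $\bilinearform{K}$ from Corollary~\ref{cor:infsup} with the defective Galerkin orthogonality encoded in Lemma~\ref{lem:consistency}. First I would fix suitable interpolants in the discrete spaces: the continuous velocity interpolant $\Intcont \VelVarEx = (\IntcontElement \VelVarEx, \IntcontFacet \VelVarEx) \in \VelSpace$ from Lemma~\ref{lem:intpolerror} and the element-wise $L^2(T)$-projection $\IntPressureSpace \PressureVarEx \in \PressureSpace$ of the pressure, and split
\[
  \VelVarEx - \VelVar = (\VelVarEx - \Intcont \VelVarEx) + (\Intcont \VelVarEx - \VelVar), \qquad \PressureVarEx - \PressureVar = (\PressureVarEx - \IntPressureSpace \PressureVarEx) + (\IntPressureSpace \PressureVarEx - \PressureVar).
\]
The interpolation parts are handled directly: $\brokenHnorm{\VelVarEx - \Intcont \VelVarEx} \lesssim (h/k)^m \|\VelVarEx\|_{H^{m+1}(\mesh)}$ follows from \eqref{lem:interrorone} together with the fact that the facet-jump contribution to $\brokenHnorm{\cdot}$ vanishes for $\VelVarEx - \Intcont \VelVarEx$ (the tangential jump of $\VelVarEx\in[H^1_0(\Omega)]^2$ is zero across interior facets and on $\partial\Omega$, and by $H^1$-conformity, respectively boundary-condition preservation, of $\Intcont\VelVarEx$ we have $\facetproj^{k-1}\facetjump{(\Intcont\VelVarEx - \VelVarEx)^t}=0$ on every facet), while $\|\PressureVarEx - \IntPressureSpace \PressureVarEx\|_0 \lesssim (h/k)^m \|\PressureVarEx\|_{H^m(\mesh)}$ is the standard $hp$ $L^2$-projection estimate.

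For the discrete remainders $(\Intcont \VelVarEx - \VelVar, \IntPressureSpace \PressureVarEx - \PressureVar) \in \VelSpace \times \PressureSpace$ I would invoke Corollary~\ref{cor:infsup}. For an arbitrary test pair $(\VelTest,\PressureTest) \in \VelSpace \times \PressureSpace$, using the discrete equations \eqref{eq:discstokes} (which give $\bilinearform{K}((\VelVar,\PressureVar),(\VelTest,\PressureTest)) = \langle \ForceVar, \HdivTest\rangle$) and the consistency identity \eqref{eq:consista} (which gives $\bilinearform{K}((\VelVarEx,\PressureVarEx),(\VelTest,\PressureTest)) = \langle \ForceVar, \HdivTest\rangle - \ConDiff(\VelVarEx,\PressureVarEx,\VelTest)$) one obtains
\[
  \bilinearform{K}\big((\Intcont\VelVarEx - \VelVar, \IntPressureSpace\PressureVarEx - \PressureVar),(\VelTest,\PressureTest)\big) = \bilinearform{K}\big((\Intcont\VelVarEx - \VelVarEx, \IntPressureSpace\PressureVarEx - \PressureVarEx),(\VelTest,\PressureTest)\big) - \ConDiff(\VelVarEx,\PressureVarEx,\VelTest).
\]
The consistency term is bounded by \eqref{lem:consistencyerror}, contributing precisely $(h/k)^m(\nu\|\VelVarEx\|_{H^{m+1}(\mesh)} + \|\PressureVarEx\|_{H^m(\mesh)})\brokenHnorm{\VelTest}$ — this is the mechanism by which the relaxed $H(\divergence)$-conformity ties the velocity error to the pressure regularity. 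Dividing by $\brokenHnorm{(\VelTest,\PressureTest)} = \sqrt{\nu}\brokenHnorm{\VelTest} + \tfrac{1}{\sqrt{\nu}}\|\PressureTest\|_0$, taking the supremum over $(\VelTest,\PressureTest)$, applying Corollary~\ref{cor:infsup} and finally a triangle inequality (together with the interpolation bounds of the first paragraph and a multiplication by $\sqrt{\nu}$) then yields the claimed estimate.

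It remains to bound $\bilinearform{K}((\Intcont\VelVarEx - \VelVarEx, \IntPressureSpace\PressureVarEx - \PressureVarEx),(\VelTest,\PressureTest))$, which is the actual work. Three structural simplifications help here: $\bilinearform{B}(\VelTest, \IntPressureSpace\PressureVarEx - \PressureVarEx)$ vanishes because $\divergence(\HdivTest)|_T \in \mathcal{P}^{k-1}(T)$ is $L^2(T)$-orthogonal to $\IntPressureSpace\PressureVarEx - \PressureVarEx$; in $\bilinearform{A}(\Intcont\VelVarEx - \VelVarEx, \VelTest)$ the two terms carrying $\facetproj^{k-1}\facetjump{(\Intcont\VelVarEx - \VelVarEx)^t}$ drop out by the vanishing noted above; and the volume part of $\bilinearform{A}$ together with $\bilinearform{B}(\Intcont\VelVarEx - \VelVarEx,\PressureTest)$ is estimated by Cauchy--Schwarz and \eqref{lem:interrorone}, giving $\lesssim \sqrt{\nu}(h/k)^m\|\VelVarEx\|_{H^{m+1}(\mesh)}\brokenHnorm{(\VelTest,\PressureTest)}$. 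The only surviving nontrivial contribution is the boundary term $\sum_{T\in\mesh}\int_{\partial T}\nu\,\partial_n(\Intcont\VelVarEx - \VelVarEx)_T\cdot\facetproj^{k-1}\facetjump{\VelTest^t}\,ds$; splitting a factor $\sqrt{k^2/h}$ onto the test part (controlled by $\brokenHnorm{\VelTest}$), the task reduces to the trace estimate $\tfrac{\sqrt{h}}{k}\|\partial_n(\Intcont\VelVarEx - \VelVarEx)_T\|_{\partial T}\lesssim (h/k)^m\|\VelVarEx\|_{H^{m+1}(T)}$. I expect this to be the main obstacle: it is not a direct consequence of Lemma~\ref{lem:intpolerror} because $\IntcontElement$ and $\Intltwocom$ are different operators, so I would insert the polynomial $\Intltwocom\nabla\VelVarEx$, bound $\|\Intltwocom\nabla\VelVarEx - \nabla\VelVarEx\|_{\partial T}$ by the trace part of \eqref{lem:interrorthree} applied to $\nabla\VelVarEx$, and bound $\|\nabla\Intcont\VelVarEx - \Intltwocom\nabla\VelVarEx\|_{\partial T}$ by a polynomial inverse trace inequality with the sharp $k^2/h$-scaling followed by \eqref{lem:interrorone} and the volume part of \eqref{lem:interrorthree}. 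It is exactly the $h/k$-scalings in Lemma~\ref{lem:intpolerror} and the $k^2/h$-weights built into $\brokenHnorm{\cdot}$ that make the resulting bound polynomial robust.
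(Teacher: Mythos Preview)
Your proposal is correct and follows essentially the same approach as the paper: the same triangle-inequality splitting with $\Intcont\VelVarEx$ and $\IntPressureSpace\PressureVarEx$, the same use of Corollary~\ref{cor:infsup} combined with Lemma~\ref{lem:consistency}, the same cancellations ($\bilinearform{B}(\VelTest,\IntPressureSpace\PressureVarEx-\PressureVarEx)=0$ and $\facetproj^{k-1}\facetjump{(\Intcont\VelVarEx-\VelVarEx)^t}=0$), and the same treatment of the critical boundary term by inserting $\Intltwocom\nabla\VelVarEx$ and combining a polynomial inverse trace inequality with \eqref{lem:interrorone} and \eqref{lem:interrorthree}. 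Your anticipation of the ``main obstacle'' matches exactly the step the paper singles out, citing \cite{MR2684358} for the idea.
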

\begin{proof}
  We start by inserting a continuous interpolation of the exact solution $\Intcont \VelVarEx \in \VelSpace $ and an element wise $L^2$ projection $\IntPressureSpace \PressureVarEx \in \PressureSpace$ and use the triangle inequality
  \begin{align*}
    \nu \brokenHnormleft \VelVarEx - \VelVar  \brokenHnormright + \| \PressureVarEx - \PressureVar \|_{L^2}
    &\le \nu \brokenHnormleft \VelVarEx - \Intcont \VelVarEx  \brokenHnormright + \| \PressureVarEx - \IntPressureSpace \PressureVarEx \|_{L^2}  + \nu \brokenHnormleft \Intcont \VelVarEx - \VelVar  \brokenHnormright +  \| \IntPressureSpace \PressureVarEx - \PressureVar \|_{L^2}.
  \end{align*}
  Using the properties of the interpolation operators, see Lemma \ref{lem:intpolerror}, the first two terms can already be estimated with the proper order
  \begin{align} \label{proof:erroresteasypart}
    \nu \brokenHnormleft \VelVarEx - \Intcont \VelVarEx  \brokenHnormright + \| \PressureVarEx - \IntPressureSpace \PressureVarEx \|_{L^2} \le \nu \left( \frac{h}{k} \right)^m \|  \VelVarEx \|_{H^{m+1}(\mesh)} + \left( \frac{h}{k} \right)^m \|  \PressureVarEx \|_{H^{m}(\mesh)}.
  \end{align}
  For the other two terms we use \eqref{eq:discretenorms}
  \begin{align*}
    \nu \brokenHnormleft \Intcont \VelVarEx - \VelVar  \brokenHnormright +  \| \IntPressureSpace \PressureVarEx - \PressureVar \|_{L^2} = \sqrt{\nu}
    \brokenHnorm{(\Intcont \VelVarEx
     - \VelVar, \IntPressureSpace \PressureVarEx - \PressureVar)}.
  \end{align*}
  Using Corollary \ref{cor:infsup} and \eqref{eq:blfK} yields 
  \begin{align*}
    \brokenHnorm{(\Intcont \VelVarEx
     - \VelVar, \IntPressureSpace \PressureVarEx - \PressureVar)}
    &\lesssim \sup_{\substack{(\VelTest,\PressureTest) \in \VelSpace \times \PressureSpace\\ (\VelTest,\PressureTest) \neq 0} } \frac{\bilinearform{K}((\Intcont \VelVarEx - \VelVar, \IntPressureSpace\PressureVarEx - \PressureVar),(\VelTest,\PressureTest))}
{\brokenHnorm{(\VelTest,\PressureTest)}}
    \\
&= \sup_{\substack{(\VelTest,\PressureTest) \in \VelSpace \times \PressureSpace\\ (\VelTest,\PressureTest) \neq 0} } \frac{\bilinearform{K}((\Intcont \VelVarEx , \IntPressureSpace \PressureVarEx),(\VelTest,\PressureTest)) - \langle {f},\VelTest \rangle}
{\brokenHnorm{(\VelTest,\PressureTest)}}.
  \end{align*}
  Using the consistency Lemma \ref{lem:consistency} and the definition of $\bilinearform{K}$ we further get
  \begin{align*}
    \brokenHnormleft(\Intcont \VelVarEx &
    - \VelVar, \IntPressureSpace \PressureVarEx - \PressureVar) \brokenHnormright  
\\
    &\leq \sup_{\substack{(\VelTest,\PressureTest) \in \VelSpace \times \PressureSpace\\ (\VelTest,\PressureTest) \neq 0} } \frac{\bilinearform{K}((\Intcont \VelVarEx - \VelVarEx , \IntPressureSpace \PressureVarEx - \PressureVarEx),(\VelTest,\PressureTest)) + \ConDiff(\VelVarEx, \PressureVarEx, \VelTest) }{\brokenHnorm{(\VelTest,\PressureTest)}} \\
    & = \sup_{\substack{(\VelTest,\PressureTest) \in \VelSpace \times \PressureSpace\\ (\VelTest,\PressureTest) \neq 0} } \frac{ \bilinearform{A}(\Intcont \VelVarEx - \VelVarEx,\VelTest) + \bilinearform{B}(\Intcont \VelVarEx - \VelVarEx, \PressureTest)  +  \bilinearform{B}(\VelTest, \IntPressureSpace \PressureVarEx - \PressureVarEx) + \ConDiff(\VelVarEx, \PressureVarEx, \VelTest)}{\brokenHnorm{(\VelTest,\PressureTest)}} \\
    & = \sup_{\substack{(\VelTest,\PressureTest) \in \VelSpace \times \PressureSpace\\ (\VelTest,\PressureTest) \neq 0} } \frac{ \bilinearform{A}(\Intcont \VelVarEx - \VelVarEx,\VelTest) + \bilinearform{B}(\Intcont \VelVarEx - \VelVarEx, \PressureTest) + \ConDiff(\VelVarEx, \PressureVarEx, \VelTest) }{\brokenHnorm{(\VelTest,\PressureTest)}},
  \end{align*}
  where we used $\divergence(\HdivSpaceHODC) \perp_{L^2(\Omega)} (\id - \IntPressureSpace) L^2(\Omega)$ in the last step.
  We start with the estimate of the first term $\bilinearform{A}(\Intcont \VelVarEx - \VelVarEx,\VelTest)$. Note that $\Intcont \VelVarEx$ is a continuous interpolation, thus we have $\facetjump{ \Intcont \VelVarEx}= \facetjump{ \VelVarEx^t}=0$, and by this
  \begin{align*}
    \bilinearform{A}(\Intcont \VelVarEx - \VelVarEx,\VelTest) 
                                        &= \displaystyle \sumoverallelements \int_{T} \nu {\nabla} {(\IntcontElement \VelVarEx - \VelVarEx) } \! : \! {\nabla} {\HdivTest} \ d {x} 
    - \int_{\partial T} \nu \frac{\partial {(\IntcontElement \VelVarEx - \VelVarEx) }}{\partial {\normal} }  \facetproj^{k-1} \facetjump{ v^t } \ d {s}.
\end{align*}
Applying the Cauchy Schwarz inequality we get
  \begin{align*}
                                        \bilinearform{A}(\Intcont \VelVarEx - \VelVarEx,\VelTest)  &\leq \displaystyle \sumoverallelements \nu \Vert  {\nabla} {(\IntcontElement \VelVarEx - \VelVarEx) } \Vert_T \Vert {\nabla} \HdivTest \|_T 
    + \nu \frac{\sqrt{h}}{k} \Vert {\nabla} {(\IntcontElement \VelVarEx - \VelVarEx) } \cdot n \Vert_{\partial T} \frac{k}{\sqrt{h}} \Vert \facetproj^{k-1}\facetjump{  {\VelTest}^t } \Vert_{\partial T} \\
                                                            & \leq \sqrt{ \displaystyle \sumoverallelements \nu \Vert  {\nabla} {(\IntcontElement \VelVarEx - \VelVarEx) } \Vert_T^2} \sqrt{ \displaystyle \sumoverallelements \nu \Vert  {\nabla} {\HdivTest } \Vert_T^2} \\
    &\qquad \qquad \qquad+ \sqrt{ \displaystyle \sumoverallelements \nu \frac{h}{k^2}\Vert  {\nabla} {(\IntcontElement \VelVarEx - \VelVarEx) \cdot n } \Vert_{\partial T}^2} \sqrt{ \displaystyle \sumoverallelements \nu \frac{k^2}{h} \Vert  \facetproj^{k-1}{\facetjump{ {\VelTest}^t }} \Vert_{\partial T}^2}.
  \end{align*}
  Using estimate \eqref{lem:interrorone} we can bound the first term of the right sum by
  \begin{align*} 
    \sqrt{ \displaystyle \sumoverallelements \nu \Vert  {\nabla} {(\IntcontElement \VelVarEx - \VelVarEx) } \Vert_T^2} \sqrt{ \displaystyle \sumoverallelements \nu \Vert  {\nabla} {\HdivTest } \Vert_T^2} \lesssim
    \nu  \left( \frac{h}{k} \right)^m \| \VelVarEx \|_{H^{m+1}(\mesh)} \brokenHnormleft \VelTest \brokenHnormright.
  \end{align*}
  For the other term we proceed similar as in \cite{MR2684358} by inserting an $L^2$ interpolation of the gradient of the exact solution, thus we get
  \begin{align*}
    \frac{\sqrt{h}}{k} \Vert  {\nabla} {(\IntcontElement \VelVarEx - \VelVarEx) \cdot n } \Vert_{\partial T} &\leq \frac{\sqrt{h}}{k} \Vert  ( {\nabla} \IntcontElement \VelVarEx - \Intltwocom \nabla \VelVarEx ) \cdot n  \Vert_{\partial T}  + k^{-\frac12} \sqrt{\frac{h}{k}} \Vert  ( \Intltwocom \nabla \VelVarEx - {\nabla} \VelVarEx ) \cdot n  \Vert_{\partial T}.
  \end{align*}
  As ${\nabla} \IntcontElement \VelVarEx - \Intltwocom \nabla \VelVarEx$ is a polynomial we can use an inverse inequality and with \eqref{lem:interrorone} and \eqref{lem:interrorthree} we get
  \begin{align*}
    \frac{\sqrt{h}}{k} \Vert & ( {\nabla} \IntcontElement \VelVarEx - \Intltwocom \nabla \VelVarEx ) \cdot n  \Vert_{\partial T} \leq \Vert  {\nabla} \IntcontElement \VelVarEx - \Intltwocom \nabla \VelVarEx  \Vert_{T} \\
                                           &\leq \Vert  {\nabla} \IntcontElement \VelVarEx - \nabla \VelVarEx  \Vert_{T}  + \Vert \nabla \VelVarEx  - \Intltwocom \nabla \VelVarEx  \Vert_{T} 
    \leq \left( \frac{h}{k} \right)^m \| \VelVarEx \|_{H^{m+1}(T)}.
  \end{align*}
  For the other term we use $k^{-\frac12} \le 1$ and equation \eqref{lem:interrorthree} to get $\Vert  ( \Intltwocom \nabla \VelVarEx - {\nabla} \VelVarEx ) \cdot n  \Vert_{\partial T} \leq \left( \frac{h}{k} \right)^m \| \VelVarEx \|_{H^{m+1}(T)}$.
  Those two estimates lead to
  \begin{align*}
    \sqrt{ \displaystyle \sumoverallelements \nu \frac{h}{k^2}\Vert  {\nabla} {(\IntcontElement \VelVarEx - \VelVarEx) \cdot n } \Vert_{\partial T}^2} &\sqrt{ \displaystyle \sumoverallelements \nu \frac{k^2}{h} \Vert \facetproj^{k-1} \facetjump{ \VelTest^t } \Vert_{\partial T}^2} \lesssim  \nu  \left( \frac{h}{k} \right)^m \| \VelVarEx \|_{H^{m+1}(\mesh)} \brokenHnormleft \VelTest \brokenHnormright.
  \end{align*}
We finally conclude $\bilinearform{A}(\Intcont \VelVarEx - \VelVarEx,\VelTest) \lesssim  \nu  \left( \frac{h}{k} \right)^m \| \VelVarEx \|_{H^{m+1}(\mesh)} \brokenHnormleft \VelTest \brokenHnormright$.
  Using the Cauchy Schwarz inequality and \eqref{lem:interrorone} we also get $\bilinearform{B}(\Intcont \VelVarEx - \VelVarEx, \PressureTest) \lesssim  \left( \frac{h}{k} \right)^m \| \VelVarEx \|_{H^{m+1}(\mesh)}    \Vert \PressureTest \Vert_{L^2} $. Together with the estimate of the consistency error \eqref{lem:consistencyerror} and the estimate of the first part \eqref{proof:erroresteasypart} we finally derive the result.
\end{proof}

\begin{remark}[$hp$-optimal error bounds for the $H(\divergence)$-conforming HDG method] \label{rem:byprod}
  Let us comment on the $H(\divergence)$-conforming HDG method that is obtained when replacing $\VelSpace$ with $\HdivSpace \times \FacetSpace$ in \eqref{eq:discstokes}, cf. \cite{LS_CMAME_2016}.
As a byproduct of our analysis, we can deduce the error estimate from Theorem \ref{theorem:bestapprox} also for the $H(\divergence)$-conforming HDG method. We notice that the coercivity result in Lemma \ref{lem:coercivity} and the LBB-stability in Lemma \ref{lem:lbb} already apply for the velocity space $\HdivSpace \times \FacetSpace$ and thus also Corollary \ref{cor:infsup} holds on  $\HdivSpace \times \FacetSpace$. Finally, the proof of Theorem \ref{theorem:bestapprox} goes through with only very minor changes.
\end{remark}

\section{A reconstruction operator for relaxed $H(\divergence)$-conforming velocities} \label{sec:reconstruction}
In Section 2.4 in \cite{ledlehrschoe2017relaxedpartI} we introduced a velocity reconstruction operator to restore $H(\divergence)$-conformity. We denote such a reconstruction operator as $\ReconHdiv: \HdivSpaceHODC \to \HdivSpace$
and define the canonical extension to $\VelSpacecon := \HdivSpace \times \FacetSpace$ as
\begin{equation}
  \Recon: \quad \VelSpace  \to  \VelSpacecon, \qquad \Recon \VelVar := (\ReconHdiv \HdivVar, \FacetVar ). 
\end{equation}
The definition and implementation of such an operator highly depends on the basis of the finite element space. Where a $DG$-like BDM interpolation could be used in general, a simple averaging of the highest order face type basis functions could be used in the case of a proper $L^2$ orthogonality on the faces. Both versions are discussed in \cite{ledlehrschoe2017relaxedpartI}.
Nevertheless we make the following assumptions on such a reconstruction operator.
\begin{assumption} \label{ass:recon}
  We assume that the reconstruction operators $\ReconHdiv$ and $\Recon$, respectively, fulfill the following conditions:
  \begin{subequations} \label{eq:ass}
    \begin{align}
      \ReconHdiv \HdivTest
      & \in \HdivSpace
      && \text{for all } \HdivTest \in \HdivSpaceHODC, \label{eq:ass1}\\      
      (\ReconHdiv \HdivTest \! \cdot \!  n, \varphi)_{L^2(F)}
      & =
      (\HdivTest \! \cdot \!  n, \varphi)_{L^2(F)}
      && \text{for all } \varphi \in \mathcal{P}^{k-1}(F), 
         \HdivTest \in \HdivSpaceHODC,~ F \in \facets,  \label{eq:ass2B}\\
      (\ReconHdiv \HdivTest, \varphi)_{L^2(T)}
      & =
      (\HdivTest, \varphi)_{L^2(T)}
      && \text{for all } \varphi \in [\mathcal{P}^{k-2}(T)]^d, 
         \HdivTest \in \HdivSpaceHODC,~ T \in \mesh,  \label{eq:ass2}\\
      \brokenHnorm{\Recon \VelTest} & \lesssim \brokenHnorm{\VelTest} && \text{for all } \VelTest \in \VelSpace. \label{eq:ass3}                                                                                
    \end{align}
  \end{subequations}
\end{assumption}
We notice that this assumption has been (analytically) verified for the reconstruction operators discussed in \cite{ledlehrschoe2017relaxedpartI} (except for the $k$-robustness of \eqref{eq:ass3}).
Now we define the pressure robust method as
\begin{equation}\label{eq:probuststokes}
  \displaystyle
  \left\{
  \begin{array}{crcrl}
    \bilinearform{A}(\VelVar,{\VelTest})
    & + \ \ \bilinearform{B}({\VelTest},\PressureVar)
    & =
    & \langle \ForceVar,  \Recon \VelTest \rangle
    & \forall \ \VelTest \in \VelSpace,\\
    & \bilinearform{B}(\VelVar,\PressureTest)
    & =
    & 0
    & \forall \ \PressureTest \in \PressureSpace.
  \end{array}
      \right.
      \tag{PR}
    \end{equation}
    There holds the following polynomial robust error estimate.
\begin{theorem}{} \label{theom:errestprobust}
  Let $\VelVarEx \in  [H_0^1(\Omega) \cap H^l(\mesh)]^2,~l\geq 2$ be the velocity solution of~\eqref{eq:stokes} and $\VelVar \in \VelSpace$ be the velocity solution of \eqref{eq:probuststokes} where $\Recon$ fulfills Assumption \ref{ass:recon}.
Then there holds $\ReconHdiv u_h \in H(\divergence,\Omega)$, $\divergence(\ReconHdiv u_h) = 0$ and for $m = \min\{k, l-1\}$ we have
  \begin{align}
    \brokenHnormleft \VelVarEx - \VelVar  \brokenHnormright  & \le  \left(\frac{h}{k}\right)^m \|  \VelVarEx \|_{H^{m+1}(\mesh)}. \label{eq:presta}
  \end{align}
\end{theorem}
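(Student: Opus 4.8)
The plan is to use the reconstruction operator to obtain \emph{pressure-robust} consistency and to combine it with coercivity on the discretely divergence-free subspace; this is exactly what eliminates $\PressureVarEx$ from the velocity estimate. For the structural statements: by \eqref{eq:ass1} we have $\ReconHdiv u_h\in\HdivSpace\subset H(\divergence,\Omega)$. To see that $\divergence(\ReconHdiv u_h)=0$ I would first show $\divergence(\ReconHdiv\HdivTest)=\divergence(\HdivTest)$ \emph{pointwise on each $T\in\mesh$} for every $\HdivTest\in\HdivSpaceHODC$: integrating by parts on $T$ against an arbitrary $q\in\mathcal{P}^{k-1}(T)$ and using \eqref{eq:ass2} (since $\nabla q\in[\mathcal{P}^{k-2}(T)]^2$) and \eqref{eq:ass2B} (since $q|_F\in\mathcal{P}^{k-1}(F)$) gives $\int_T\divergence(\ReconHdiv\HdivTest)\,q\,dx=\int_T\divergence(\HdivTest)\,q\,dx$, and since both divergences lie in $\mathcal{P}^{k-1}(T)$ they coincide. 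Since the discrete solution satisfies $\bilinearform{B}(\VelVar,\PressureTest)=0$ and $\divergence(\HdivVarT)\in\mathcal{P}^{k-1}(T)$, \eqref{eq:weakinc}--\eqref{eq:stronginc2} give $\divergence(\HdivVarT)=0$ on every $T$; together with the pointwise identity above and the $H(\divergence)$-conformity this yields $\divergence(\ReconHdiv u_h)=0$ in $\Omega$.

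For the error bound, $\VelVar$ lies in the discretely divergence-free space $V_h^0:=\{\VelTest\in\VelSpace:\bilinearform{B}(\VelTest,\PressureTest)=0\ \forall\PressureTest\in\PressureSpace\}$, on which \eqref{eq:probuststokes} reduces to $\bilinearform{A}(\VelVar,\VelTest)=\langle\ForceVar,\Recon\VelTest\rangle$ for all $\VelTest\in V_h^0$. I would construct $w_h\in V_h^0$ by correcting the continuous interpolant: let $w_h^c\in\VelSpace$ be supplied by the LBB-stability of Lemma \ref{lem:lbb} with $\bilinearform{B}(w_h^c,\PressureTest)=\bilinearform{B}(\Intcont\VelVarEx,\PressureTest)=-\int_\Omega\PressureTest\,\divergence(\IntcontElement\VelVarEx-\VelVarEx)\,dx$ for all $\PressureTest\in\PressureSpace$ and $\brokenHnorm{w_h^c}\lesssim\|\divergence(\IntcontElement\VelVarEx-\VelVarEx)\|_{L^2(\Omega)}\lesssim(h/k)^m\|\VelVarEx\|_{H^{m+1}(\mesh)}$ (using $\divergence\VelVarEx=0$ and \eqref{lem:interrorone}); then $w_h:=\Intcont\VelVarEx-w_h^c\in V_h^0$ and $\brokenHnorm{\VelVarEx-w_h}\lesssim(h/k)^m\|\VelVarEx\|_{H^{m+1}(\mesh)}$. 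By coercivity (Lemma \ref{lem:coercivity}) and the equation on $V_h^0$,
\[
\nu\,\brokenHnorm{\VelVar-w_h}^2\lesssim\bilinearform{A}(\VelVar-w_h,\VelVar-w_h)=\big(\langle\ForceVar,\Recon(\VelVar-w_h)\rangle-\bilinearform{A}(\VelVarEx,\VelVar-w_h)\big)+\bilinearform{A}(\VelVarEx-w_h,\VelVar-w_h).
\]
The last term splits as $\bilinearform{A}(\VelVarEx-\Intcont\VelVarEx,\VelVar-w_h)+\bilinearform{A}(w_h^c,\VelVar-w_h)$, the first summand estimated exactly as the term $\bilinearform{A}(\Intcont\VelVarEx-\VelVarEx,\cdot)$ in the proof of Theorem \ref{theorem:bestapprox} and the second by boundedness of $\bilinearform{A}$ on the discrete space, both bounded by $\nu(h/k)^m\|\VelVarEx\|_{H^{m+1}(\mesh)}\,\brokenHnorm{\VelVar-w_h}$.

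The crucial term is the first bracket. Write $\VelTest:=\VelVar-w_h\in V_h^0$, let $v_{\mathcal{T}}$ be its volume part and $\delta:=\ReconHdiv v_{\mathcal{T}}-v_{\mathcal{T}}$. Since $\VelTest\in V_h^0$ forces $\divergence(v_{\mathcal{T}})=0$ elementwise, Lemma \ref{lem:consistency} gives $\bilinearform{A}(\VelVarEx,\VelTest)=\langle\ForceVar,v_{\mathcal{T}}\rangle-\ConDiff(\VelVarEx,\PressureVarEx,\VelTest)$, so $\langle\ForceVar,\Recon\VelTest\rangle-\bilinearform{A}(\VelVarEx,\VelTest)=\langle\ForceVar,\delta\rangle+\ConDiff(\VelVarEx,\PressureVarEx,\VelTest)$. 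Substituting $\ForceVar=-\nu\Delta\VelVarEx+\nabla\PressureVarEx$ elementwise and integrating the pressure contribution by parts, the facet terms carrying $\PressureVarEx$ cancel \emph{exactly} against the pressure part of $\ConDiff$: this uses $\divergence\delta=0$, the orthogonality $(\delta\cdot n)\perp\mathcal{P}^{k-1}(F)$ from \eqref{eq:ass2B}, the identity $\jumpDG{\delta\cdot n}=-\jumpDG{v_{\mathcal{T}}\cdot n}$ (since $\ReconHdiv v_{\mathcal{T}}\in\HdivSpace\subset H(\divergence,\Omega)$) and the defining defect $\facetproj^{k-1}\jumpDG{w\cdot n}=0$ ($w\in\HdivSpaceHODC$) of the relaxed space. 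What is left is $\PressureVarEx$-free, $\langle\ForceVar,\delta\rangle+\ConDiff(\VelVarEx,\PressureVarEx,\VelTest)=-\nu\langle\Delta\VelVarEx,\delta\rangle+\ConDiff_\nu(\VelVarEx,\VelTest)$, where $\ConDiff_\nu$ denotes the viscous part of $\ConDiff$, bounded by $\nu(h/k)^m\|\VelVarEx\|_{H^{m+1}(\mesh)}\,\brokenHnorm{\VelTest}$ (viscous part of \eqref{lem:consistencyerror}). For the first summand, \eqref{eq:ass2} gives $\proj^{k-2}_T\delta=0$, hence $|\langle\Delta\VelVarEx,\delta\rangle|\le\|(\id-\proj^{k-2})\Delta\VelVarEx\|_{L^2(\Omega)}\|\delta\|_{L^2(\Omega)}\lesssim(h/k)^{m-1}\|\VelVarEx\|_{H^{m+1}(\mesh)}\,\|\delta\|_{L^2(\Omega)}$ by a standard $hp$ projection estimate; and since $\delta|_T$ is a polynomial with $\proj^{k-2}_T\delta=0$, the $hp$ estimate for the $L^2$-projection gives $\|\delta\|_T=\|(\id-\proj^{k-2}_T)\delta\|_T\lesssim(h/k)\|\nabla\delta\|_T$, so that $\|\delta\|_{L^2(\Omega)}\lesssim(h/k)\big(\brokenHnorm{\Recon\VelTest}+\brokenHnorm{\VelTest}\big)\lesssim(h/k)\,\brokenHnorm{\VelTest}$ by \eqref{eq:ass3}. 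Collecting all bounds with $\VelTest=\VelVar-w_h$ gives $\nu\brokenHnorm{\VelVar-w_h}^2\lesssim\nu(h/k)^m\|\VelVarEx\|_{H^{m+1}(\mesh)}\,\brokenHnorm{\VelVar-w_h}$, i.e. $\brokenHnorm{\VelVar-w_h}\lesssim(h/k)^m\|\VelVarEx\|_{H^{m+1}(\mesh)}$, and the triangle inequality with $\brokenHnorm{\VelVarEx-w_h}$ closes the estimate.

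The conceptual heart, and the step I expect to require the most care, is the exact cancellation of all $\PressureVarEx$-carrying terms in the consistency error; it rests on the compatibility of the moment condition \eqref{eq:ass2B} with the relaxed-conformity defect of $\HdivSpaceHODC$. For \emph{polynomial} robustness the delicate quantitative point is the bound $\|\delta\|_{L^2(\Omega)}\lesssim(h/k)\,\brokenHnorm{\VelTest}$: it must exploit that $\delta$ is $L^2$-orthogonal to $[\mathcal{P}^{k-2}]^2$ on each element, since a plain Poincaré inequality on $\delta$ would cost a spurious factor of $k$ and destroy the $k$-robustness.
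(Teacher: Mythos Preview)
Your argument is correct and gives the same $hp$-robust bound, but it follows a genuinely different route from the paper's proof. The paper introduces the $H(\divergence)$-conforming discrete solution $\VelVarcon$ of \eqref{eq:hdivconforming} as an intermediate pivot, invokes the known estimate $\brokenHnorm{\VelVarEx-\VelVarcon}\lesssim (h/k)^m\|\VelVarEx\|_{H^{m+1}(\mesh)}$, and then bounds $\brokenHnorm{\VelVarcon-\VelVar}$ by analysing $\bilinearform{A}(\VelVarcon,\VelTest-\Recon\VelTest)$ for discretely divergence-free $\VelTest$. After one integration by parts, the volume term drops by \eqref{eq:ass2}, the normal boundary term drops by \eqref{eq:ass2B}, and the two surviving facet integrals are controlled by $\nu\,\brokenHnorm{\VelVarEx-\VelVarcon}\,\brokenHnorm{\VelTest-\Recon\VelTest}$ via the crucial identity $\facetproj^{k-1}\facetjump{(\VelVarcon)^t}=\facetproj^{k-1}\facetjump{(\VelVarcon-\VelVarEx)^t}$; \eqref{eq:ass3} then closes the argument. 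No direct pressure-cancellation computation and no projection estimate for $\delta$ are needed.

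Your approach instead works ``from scratch'': you build a discretely divergence-free comparison function $w_h$ via the LBB correction of $\Intcont\VelVarEx$, and then show that the modified right-hand side $\langle\ForceVar,\Recon\VelTest\rangle$ \emph{exactly} kills the $\PressureVarEx$-part of the consistency defect $\ConDiff$, using that $\delta\cdot n$ has vanishing $\mathcal{P}^{k-1}$-moments and that $\jumpDG{\delta\cdot n}=-\jumpDG{v_{\mathcal{T}}\cdot n}$. This is a nice structural observation and makes the pressure robustness mechanism very explicit. The price is the extra term $-\nu\langle\Delta\VelVarEx,\delta\rangle$, for which you need the auxiliary bound $\|\delta\|_T\lesssim(h/k)\|\nabla\delta\|_T$; your justification via the $hp$ $L^2$-projection estimate applied to $\delta$ (which is $L^2$-orthogonal to $[\mathcal{P}^{k-2}(T)]^2$) is sound, but note that this step leans on a slightly sharper version of Lemma~\ref{lem:intpolerror} with the $H^1$-seminorm on the right. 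The paper's route sidesteps this entirely by never evaluating $\ForceVar$ against $\delta$ at all.

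In short: the paper reduces to a comparison of two discrete solutions and exploits the algebraic structure of $\bilinearform{A}(\VelVarcon,\cdot)$, while you perform a direct Strang-type consistency analysis with an explicit pressure cancellation. Both hinge on \eqref{eq:ass3} in the final step.
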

\begin{proof}
  Let $\VelVarcon \in \VelSpacecon$ and $\PressureVarcon \in \PressureSpace$ be the solution of the pressure robust $H(\divergence)$-conforming HDG method as introduced in \cite{LS_CMAME_2016}. There holds
  \begin{align} \label{eq:hdivconforming}
 \begin{array}{crcrl}
    \bilinearform{A}(\VelVarcon,{\VelTest})
    & + \ \ \bilinearform{B}({\VelTest},\PressureVarcon)
    & =
    & \langle \ForceVar, \VelTest \rangle
    & \forall \ \VelTest \in \VelSpacecon,\\
    & \bilinearform{B}(\VelVarcon,\PressureTest)
    & =
    & 0
    & \forall \ \PressureTest \in \PressureSpace,
 \end{array}
  \end{align}
  and with similar estimates as in the proof of \cite[Theorem 2.3]{LedererSchoeberl2017} also
  \begin{align} \label{eq:hdivconforming:error}
    \brokenHnormleft \VelVarEx - \VelVarcon  \brokenHnormright  & \le  \left(\frac{h}{k}\right)^m \|  \VelVarEx \|_{H^{m+1}(\mesh)}.
  \end{align}
For an arbitrary $\VelTest \in \VelSpace$ with $\bilinearform{B}({\VelTest},\PressureTest) = 0$ for all $\PressureTest \in \PressureSpace$ we have
\begin{align*}
  \bilinearform{A}(\VelVarcon-\VelVar,{\VelTest}) &=   \bilinearform{A}(\VelVarcon,{\VelTest}) -   \bilinearform{A}(\VelVar,{\VelTest}) 
  =   \bilinearform{A}(\VelVarcon,{\VelTest}) -    \langle \ForceVar, \Recon \VelTest \rangle \\
                                                  &=   \bilinearform{A}(\VelVarcon,{\VelTest} - \Recon \VelTest) + \bilinearform{A}(\VelVarcon,\Recon \VelTest ) - \langle \ForceVar, \Recon \VelTest \rangle.
\end{align*}
As $\Recon \VelTest \in \VelSpacecon$ we have $\bilinearform{A}(\VelVarcon,\Recon \VelTest ) = \langle \ForceVar, \Recon \VelTest \rangle$ and so
\begin{align*}
  \bilinearform{A}(\VelVar-\VelVarcon,{\VelTest}) =& \bilinearform{A}(\VelVarcon,{\VelTest - \Recon \VelTest}) \\
  =&\displaystyle \sumoverallelements \int_{T} \nu {\nabla} {\HdivVar^+} \! : \! {\nabla} ({\HdivTest-\ReconHdiv \HdivTest}) \ d {x} 
 \displaystyle- \int_{\partial T} \nu \frac{\partial {\HdivVar^+}}{\partial {\normal} }  \facetproj^{k-1} \facetjump{ ({\VelTest - \Recon \VelTest})^t } \ d {s} \\
   & \displaystyle- \int_{\partial T} \nu \frac{\partial ( {\VelTest - \Recon \VelTest})}{\partial {\normal} } \facetproj^{k-1} \facetjump{ (\VelVarcon)^t } \ d {s}
     + \int_{\partial T} \nu \frac{\stab k^2}{h} \facetproj^{k-1} \facetjump{ (\VelVarcon)^t }  \facetproj^{k-1} \facetjump{ ({\VelTest - \Recon \VelTest})^t } \ d {s}.
\end{align*}
Using integration by parts on each element we get
\begin{align*}
  \sumoverallelements \int_{T} \nu {\nabla} {\HdivVar^+} \! : \! {\nabla} {(\HdivTest-\ReconHdiv \HdivTest)} \ d {x} &= \sumoverallelements \int_{T} -\nu {\Delta} {\HdivVar^+} ({\HdivTest-\ReconHdiv \HdivTest}) \ d {x} + \int_{\partial T} \nu \frac{\partial {\HdivVar^+}}{\partial {\normal} } {\facetproj^{k-1}(\HdivTest-\ReconHdiv \HdivTest)} \ d {s} 
\end{align*}
On the element boundaries we split the difference in a tangential and a normal part
\begin{align*}
{(\VelTest - \Recon \VelTest)} = {(\VelTest - \Recon \VelTest)}^t + {((\VelTest - \Recon \VelTest) \cdot n) \cdot n},
\end{align*}
and as ${\VelTest - \Recon \VelTest} = (\HdivTest -\ReconHdiv \HdivTest , 0)$ we can write 
\begin{align*}
{(\VelTest - \Recon \VelTest)} = \facetjump{{(\VelTest - \Recon \VelTest)}^t} + {((\VelTest - \Recon \VelTest) \cdot n) \cdot n}.
\end{align*}
By this it follows 
\begin{align*}
 \bilinearform{A}(\VelVar-\VelVarcon,{\VelTest}) = & \displaystyle \sumoverallelements \int_{T} -\nu {\Delta} {\HdivVar^+} ({\HdivTest-\ReconHdiv \HdivTest}) \ d {x}   \displaystyle -\int_{\partial T} \nu \frac{\partial {\HdivVar^+}}{\partial {\normal} }  \facetproj^{k-1} ((\VelTest - \Recon \VelTest)\cdot n)\cdot n \ d {s} \\
    & \displaystyle- \int_{\partial T} \nu \frac{\partial ( {\VelTest - \Recon \VelTest})}{\partial {\normal} } \facetproj^{k-1} \facetjump{ (\VelVarcon)^t } \ d {s}
+ \int_{\partial T} \nu \frac{\stab k^2}{h} \facetproj^{k-1} \facetjump{ (\VelVarcon)^t }  \facetproj^{k-1} \facetjump{ ({\VelTest - \Recon \VelTest})^t } \ d {s}.
\end{align*}
Due to \eqref{eq:ass2} the first term vanishes. Further note that the difference of the normal part ${((\VelTest - \Recon \VelTest) \cdot n) \cdot n}$ is a polynomial of order $k$ which is orthogonal on polynomials of order $k-1$, see Lemma 3.1 in \cite{ledlehrschoe2017relaxedpartI} which implies that also the second term vanishes. Using the Cauchy Schwarz inequality, an inverse trace inequality for polynomials and $\facetproj^{k-1} \facetjump{ (\VelVarcon)^t } = \facetproj^{k-1} \facetjump{ (\VelVarcon - \VelVarEx)^t } $ we bound the third term 
\begin{align*}
  \sumoverallelements \int_{\partial T} \nu \frac{\partial ( {\VelTest - \Recon \VelTest})}{\partial {\normal} } \facetproj^{k-1} \facetjump{ (\VelVarcon)^t } \ d {s} 
                                            & \leq \nu \sqrt{\sumoverallelements \frac{k^2}{h} \|  \facetproj^{k-1} \facetjump{ (\VelVarcon)^t } \|^2_{\partial T} }  \sqrt{ \sumoverallelements  \frac{h}{k^2} \| \nabla ({\VelTest - \Recon \VelTest}) \cdot n \|^2_{\partial T}  } \\
  & \leq \nu  \brokenHnormleft \VelVarEx - \VelVarcon  \brokenHnormright \brokenHnormleft \VelTest - \Recon \VelTest  \brokenHnormright,
\end{align*}
and similarly the last term
\begin{align*}
  \sumoverallelements \int_{\partial T} \nu \frac{\stab k^2}{h} \facetproj^{k-1} \facetjump{ (\VelVarcon)^t }  \facetproj^{k-1} \facetjump{ ({\VelTest - \Recon \VelTest})^t } \ d {s} \le \nu \brokenHnormleft \VelVarEx - \VelVarcon  \brokenHnormright \brokenHnormleft \VelTest - \Recon \VelTest  \brokenHnormright.
\end{align*}
All together this leads to
\begin{align*}
 \nu \brokenHnormleft \VelVarcon - \VelVar  \brokenHnormright &\le \sup\limits_{\substack{\VelTest \in \VelSpace\\ \bilinearform{B}({\VelTest},\PressureTest) = 0~\forall \PressureTest \in \PressureSpace} } \frac{\bilinearform{A}(\VelVarcon-\VelVar,{\VelTest})}{\brokenHnormleft \VelTest  \brokenHnormright} 
                                                           \le \sup\limits_{\substack{\VelTest \in \VelSpace\\ \bilinearform{B}({\VelTest},\PressureTest) = 0~\forall \PressureTest \in \PressureSpace} } \frac{\bilinearform{A}(\VelVarcon,{\VelTest} - \Recon \VelTest)}{\brokenHnormleft \VelTest  \brokenHnormright} \\
  &\lesssim \frac{\nu \brokenHnormleft \VelVarEx - \VelVarcon  \brokenHnormright \brokenHnormleft \VelTest - \Recon \VelTest  \brokenHnormright}{\brokenHnormleft \VelTest  \brokenHnormright} \le \nu \brokenHnormleft \VelVarEx - \VelVarcon  \brokenHnormright.
\end{align*}
where we used \eqref{eq:ass3} in the last step. 
We conclude the proof by using the triangle inequality and estimate \eqref{eq:hdivconforming:error}.
\end{proof}



\begin{remark} \label{remark:probust}
The crucial part of the proof of Theorem \ref{theom:errestprobust} was the usage of the polynomial robust stability estimate of the reconstruction, equation \eqref{eq:ass3}, in the last steps. It is an open question if this assumption holds true. Nevertheless numerical evidence is given in Section \ref{sec:numex:probust}.
\end{remark}

\section{Solving unsteady incompressible Navier--Stokes equations with operator splitting} \label{sec:unsteadyNVS}
We now turn over to the solution of the unsteady incompressible Navier--Stokes equations.
As in \cite{LS_CMAME_2016} detailed described, we use operator splitting methods which treat the convection only explicit so that linear systems that need to be solved involve only (symmetric) Stokes-like operators. Thereby the benefits of the hybridization and the reduction of the facet unknowns have a full effect on the efficiency of the method.

\subsection{Semi-discretization}\label{section:semidiscr}
As convection is only treated implicitly, there is no point in using a Hybrid (or hybridized) DG formulation for it. Instead we use a standard Upwind DG formulation. We therefore introduce the convection trilinear form

\begin{equation}
    \! \displaystyle \bilinearform{C}(\HdivVar;\DGVar,\DGTest) \! := 
  \! \sum_T -\! \int_{T} \!\! \DGVar \otimes \HdivVar \!:\! \nabla \DGTest \ d{x} +\!
  \int_{\partial T} \!\! \HdivVar \! \cdot \! \normal \ \hat{\DGVar} \ \DGTest \ d {s}, \
  \HdivVar \in \HdivSpace, \DGVar, \DGTest \in \DGSpace.
\end{equation}
where $\hat{\DGVar}$ denotes the upwind value $ \hat{\DGVar} = \lim_{\varepsilon \searrow 0} {\DGVar}({x}-\varepsilon \HdivVar({x})) $ and $\DGSpace$ is the standard DG space
$\DGSpace := \{ \DGVar \in \prod_{T\in\mesh} [\mathcal{P}^k(T)]^2 \}$.
Note that the first argument of $\bilinearform{C}$ has to be in $\HdivSpace$, so that $\HdivVar \cdot \normal$ is unique on every facet. Further, we have $\HdivSpace \subset \HdivSpaceHODC \subset \DGSpace$ so that $\bilinearform{C}$ is well defined for functions in $\HdivSpaceHODC$. For $w,z \in \VelSpace = \HdivSpaceHODC \times \FacetSpace$ we define, with abuse of notation,
$\displaystyle \bilinearform{C}(\HdivVar;w,z) := \bilinearform{C}(\HdivVar;w_T,z_T)$, i.e. we ignore the facet unknowns. Now, step-by-step we state different semi-discretizations for the unsteady Navier--Stokes equations. For simplicity of presentation we assume that the r.h.s. forcing term $f$ is zero.

{\it Basic semi-discretization.}
We start with a straight-forward version. Given initial values for the velocity $u_0 \in [L^2(\Omega)]^2$, find $(u_h(t),p_h(t))\in \VelSpace\times\PressureSpace$, $t \in [0,T]$ that solve
\begin{eqnarray} 
\displaystyle
  \hspace*{-0.02cm}
  \left\{
  \hspace*{-0.17cm}
\begin{array}{c@{\hspace*{0cm}}c@{\hspace*{0.05cm}}c@{\hspace*{0.05cm}}l@{\hspace*{0.05cm}}l@{\hspace*{0.05cm}}l}
( \frac{\partial}{\partial t}\VelVar,\VelTest)_{\Omega} +
\bilinearform{A}(\VelVar,\VelTest) +&
\bilinearform{D}(\VelTest,\PressureVar) 	& = &
\bilinearform{C}({\HdivVar};\VelVar,\VelTest) 	& \forall \ \VelTest \in \VelSpace, & t\in [0,T], \\
  & \bilinearform{D}(\VelVar,\PressureTest)				& = & 0		&
\forall \ \PressureTest \in \PressureSpace, &t\in [0,T]. 
\end{array}
\right.
\end{eqnarray}
With the definition of the bilinear form $\bilinearform{K}$, see \eqref{eq:blfK}, we can also write:
Find  $(u_h(t),p_h(t))\in \VelSpace\times\PressureSpace$, $t \in [0,T]$ with $\VelVar(0) = \Pi \VelVarEx_0$ so that for all $\VelTest \in \VelSpace, \PressureTest \in \PressureSpace$ and almost all $t\in [0,T]$ there holds
\begin{subequations}
\begin{equation} \label{eq:semidiscrete}
( \frac{\partial}{\partial t}\VelVar,\VelTest)_{\Omega} +
\bilinearform{K}((\VelVar,\PressureVar),(\VelTest,\PressureTest)) =
\bilinearform{C}({\HdivVar};\VelVar,\VelTest).
\end{equation}
Here, we moved the convection to the r.h.s. to emphasize the explicit treatment of this term in the full discretization below. The formulation in \eqref{eq:semidiscrete} is neither pressure robust nor energy stable nor does it provide solenoidal solutions.

{\it Pressure-robust semi-discretization.}
To obtain a pressure robust formulation, we proceed as in \cite{ledlehrschoe2017relaxedpartI} and replace $\VelTest$ with  $\Recon(\VelTest)$ where $\Recon$ is the reconstruction operator introduced in Section \ref{sec:reconstruction}. Hence, we replace \eqref{eq:semidiscrete} with 
\begin{equation} \label{eq:semidiscrete:pressurerobust}
( \frac{\partial}{\partial t}\VelVar,\VelTest)_{\Omega} +
\bilinearform{K}((\VelVar,\PressureVar),(\VelTest,\PressureTest)) =
\bilinearform{C}({\HdivVar};\VelVar,\Recon(\VelTest))\!.
\end{equation}
Note that the consistency error introduced by the reconstruction operator has been analyzed for the Stokes problem in \cite{ledlehrschoe2017relaxedpartI}.

{\it Energy-stable semi-discretization.}
To obtain an energy-stable formulation we would like to have that the convection trilinear form is non-negative if the second and third argument coincide. For this to be true, it is mandatory that the advective velocity (first argument in the trilinear form) is pointwise divergence free, cf. \cite{cockburn2005locally}. To achieve this, we replace the $\HdivVar$ which can be normal-discontinuous (in the highest order moments) with its reconstructed counterpart $\Recon(\HdivVar)$ which is pointwise divergence free. We notice that this perturbation has also been analyzed in \cite{ledlehrschoe2017relaxedpartI} and has been found to be of higher order.
Applying this modification to \eqref{eq:semidiscrete} we obtain the energy-stable formulation
\begin{equation} \label{eq:semidiscrete:energystable}
( \frac{\partial}{\partial t}\VelVar,\VelTest)_{\Omega} +
\bilinearform{K}((\VelVar,\PressureVar),(\VelTest,\PressureTest)) =
\bilinearform{C}(\ReconHdiv({\HdivVar});\VelVar,\VelTest).
\end{equation}

{\it Energy-stable and prossure-robust semi-discretization.}
If we combine the pressure-robust and the energy-stable formulations, we apply the reconstruction operator on the advective velocity (first argument) and the  test function (third argument of $\bilinearform{C}$). However, to obtain an energy-stability result we require a symmetry in the reconstruction of the second and the third argument so that we finally apply the reconstruction on all arguments of $\bilinearform{C}$ which leads to
\begin{equation} \label{eq:semidiscrete:enstabprob}
( \frac{\partial}{\partial t}\VelVar,\VelTest)_{\Omega} +
\bilinearform{K}((\VelVar,\PressureVar),(\VelTest,\PressureTest)) =
\bilinearform{C}(\ReconHdiv({\HdivVar});\Recon(\VelVar),\Recon(\VelTest)).
\end{equation}

{\it Pointwise divergence free solutions.}
Solutions $\VelVar$ obtained by any of the variants \eqref{eq:semidiscrete}-\eqref{eq:semidiscrete:enstabprob} can be post-processed by the reconstruction operator to obtain a pointwise divergence free solution at any time in $[0,T]$. If this is done after every time step in a time stepping method, see also next Section, the reconstruction steps for the first two arguments in
\eqref{eq:semidiscrete:energystable} and \eqref{eq:semidiscrete:enstabprob} become unnecessary.

\end{subequations}
\subsection{Full discretization with a first order IMEX scheme} \label{sec::imex}
To obtain a full discretization we combine the semi-discretization from the last Section with operator splitting type time integration. The operator splitting methods that we consider are of convection-diffusion type, i.e. that the Stokes operator is treated implicitly and the convection operator only explicitly. Different possibilities exist to derive time integration schemes of this type. Here, we only present a very simple prototype method, the first order IMEX scheme. For further details and different suitable schemes we refer to \cite[Section 3]{LS_CMAME_2016}.

The idea of the first order IMEX scheme is to apply an Euler discretization to any of the semi-discretizations \eqref{eq:semidiscrete}-\eqref{eq:semidiscrete:enstabprob} where we use an implicit treatment for the Stokes operator and an explicit treatment for the convection.
Taking \eqref{eq:semidiscrete} as a basis, setting $\Delta t := t^{n+1}- t^n$ for $t^n,t^{n+1} \in [0,T]$, this results in the following scheme: \\
Given $\VelVar^n \approx \VelVar(t^n) \in \VelSpace$ we define
$$
(\VelVar^{n+1},\PressureVar^{n+1}) = (\barVelVar^{n+1},\PressureVar^{n+1}) \approx (\VelVar(t^n),\PressureVar(t^n)) \in \VelSpace \times \PressureSpace,
$$
where $(\barVelVar^{n+1},\barPressureVar^{n+1})$ is the solution to
\begin{equation} \label{eq:imex1}
  \begin{split}
    ( \barVelVar^{n+1}&,\VelTest)_{\Omega}
    + \Delta t \ \bilinearform{K}((\barVelVar^{n+1},\PressureVar^{n+1}),(\VelTest,\PressureTest)) \\
    & =
    ( \VelVar^{n},\VelTest)_{\Omega} +
    \Delta t \ \bilinearform{C}({\HdivVar^n};\VelVar^n,\VelTest), \quad\quad  \forall (\VelTest,\PressureTest) \in \VelSpace \times \PressureSpace.
  \end{split}
\end{equation}
Corresponding pressure-robust and/or energy-stable versions are easily obtained by introducing corresponding reconstruction operators in $\bilinearform{C}({\HdivVar^n};\VelVar^n,\VelTest)$.
If the solution to every time step should be divergence free, we can simply set $\VelVar^{n+1} = \Recon(\barVelVar^{n+1})$.
The full-featured version which is energy-stable, pressure-robust and has pointwise divergence free solutions is the following: \\
Given $\VelVar^n \approx \VelVar(t^n) \in \VelSpace$ we define
$$
(\VelVar^{n+1},\PressureVar^{n+1}) := (\Recon(\barVelVar^{n+1}),\PressureVar^{n+1}) \approx (\VelVar(t^n),\PressureVar(t^n)) \in \VelSpace \times \PressureSpace,
$$
where $(\barVelVar^{n+1},\barPressureVar^{n+1})$ is the solution to
\begin{equation} \label{eq:imex2}
  \begin{split}
    ( \barVelVar^{n+1}&,\VelTest)_{\Omega}
    + \Delta t \ \bilinearform{K}((\barVelVar^{n+1},\PressureVar^{n+1}),(\VelTest,\PressureTest)) \\
    & =
    ( \VelVar^{n},\VelTest)_{\Omega} +
    \Delta t \ \bilinearform{C}(\underbrace{\ReconHdiv(\HdivVar^n)}_{=\HdivVar^n};\underbrace{\Recon(\VelVar^n)}_{=\VelVar^n},\Recon(\VelTest)),~  \forall (\VelTest,\PressureTest) \in \VelSpace \times \PressureSpace. \!\!\!
  \end{split}
\end{equation}
We notice that with this version, we preserve all the desirable properties of an $H(\divergence)$-conforming method, i.e. energy-stability, pressure-robustness and solenoidal solutions while keeping the linear systems in a structure which allows for an efficient solution.
\section{Numerical examples} \label{sec:numex}
In this section we present several numerical problems. 
Besides the different steady Stokes discretizations defined in this work we also consider the different semi-discretizations as defined in Section \ref{section:semidiscr} to solve the unsteady Navier--Stokes equations. In order to make it easier to distinguish between the different versions \eqref{eq:semidiscrete}, \eqref{eq:semidiscrete:pressurerobust}, \eqref{eq:semidiscrete:energystable} and \eqref{eq:semidiscrete:enstabprob} we denote solutions of the corresponding problems as $\VelVar^{\text{\tiny a}}$, $\VelVar^{\text{\tiny b}}$,$\VelVar^{\text{\tiny c}}$ and $\VelVar^{\text{\tiny d}}$ respectively. Further we also consider  the discretization presented in \cite{LS_CMAME_2016}. It is an $H(\divergence)$-conforming method  with unknowns of order $k$ (instead of $k-1$ as for the relaxed methods) involved for the normal-continuity and also unknowns of order $k$ for the (weak) tangential continuity. Solutions using this type of spatial discretization are denoted by $\VelVar^{\text{\tiny n,t}}$. Note that we also used this discretization for a comparison of the computational costs in \cite[Section 5.2]{ledlehrschoe2017relaxedpartI}.  All implementations of the numerical examples were performed within the finite element library Netgen/NGSolve, see \cite{netgen,schoeberl2014cpp11}.
\subsection{Polynomial robustness of problem \eqref{eq:probuststokes}} \label{sec:numex:probust}
In this section we study the convergence rate with respect to the polynomial order $k$ on the Kovasznay flow as in \cite{kovasznay1948laminar}. Let $\Omega = [-\frac{1}{2} ,1] \times [-\frac{1}{2},\frac{3}{2}]$ and $\nu = 1/40$. The exact solution of the steady $(\partial u / \partial t = 0)$ Navier--Stokes flow is given by
\begin{align}\label{eq:exkovasznay}
  u &:= (1-e^{\lambda x} \cos(2 \pi y),\frac{\lambda}{2\pi} e^{\lambda x} \sin(2 \pi y)) \quad \textrm{and} \quad p := -\frac{1}{2}e^{2 \lambda x} + \overline{p}
\end{align}
with $\lambda = \frac{1}{2 \nu} - \sqrt{\frac{1}{4\nu^2} + 4 \pi^2}$ and $\overline{p} \in \mathbb{R}$ such that $p \in L^2_0(\Omega)$, see also Figure \ref{fig:kovflow}. We solve a steady Stokes flow and choose $\ForceVar := - ( \VelVarEx \cdot \nabla) \VelVarEx$ as right hand side such that \eqref{eq:exkovasznay} can be used as reference solution. We used a fixed mesh with $| \mathcal{T}| = 20$ and the polynomial order $k \in \{2,\dots,14\}$. There are several observations to make. In Table \ref{table::polyrobrec} we compare the  broken $H^1$ semi norm errors for several solutions of the given steady Stokes flow. The first column represents the error of the solution $\VelVar$ of the variational formulation given by \eqref{eq:discstokes}. The same error is also given in Figure \ref{fig:polyrobrec}. We can clearly see an exponential convergence of the error as proven in Theorem \ref{theorem:bestapprox}. The third column of Table \ref{table::polyrobrec} shows the error of $\VelVar^{\text{\tiny PR}}$, the solution of the pressure robust variational formulation given by \eqref{eq:probuststokes}. Theorem  \ref{theom:errestprobust} predicts an exponential convergence if Assumption \ref{eq:ass}, especially equation \eqref{eq:ass3} holds true, see Remark \ref{remark:probust}. Comparing the error with the first column, we can clearly see no difference, so Theorem \ref{theom:errestprobust} seems to hold true. For a further investigation of the polynomial robustness of the reconstruction \eqref{eq:ass3}, the errors of the solutions with a subsequent application of $\Recon$ are given in Figure \ref{fig:polyrobrec} and columns two and four of Table \ref{table::polyrobrec}. Again one can see no significant difference from which we conclude a numerical evidence of \eqref{eq:ass3}. We want to mention that this problem was chosen such that the irrotational part and the divergence free part of the right hand side are of the same magnitude. By this the comparison of the velocity errors of $\VelVar$ and $\VelVar^{\text{\tiny PR}}$ is more reliable as we do not see the impact of such imbalance  as discussed in \cite[Section 5.1]{ledlehrschoe2017relaxedpartI}.

\begin{figure}
  \centering
  \begin{tikzpicture}
    \begin{axis}[
      hide axis,
      scale only axis,
      height=0pt,
      width=0pt,
      colormap/jet,
      colorbar sampled,
      point meta min=0,
      point meta max=2.5,      
      colorbar style={ samples=24,
        width=0.3cm, height = 5.2cm,
        ytick={0,1.25,2.5},
        yticklabel style={style={font=\footnotesize}},
        yticklabel pos=left
      }]
      \addplot [draw=none] coordinates {(0,0)};
    \end{axis}
    \node[] (numex1) at (3.5,-2.6)  {\includegraphics[width=0.3\textwidth]{./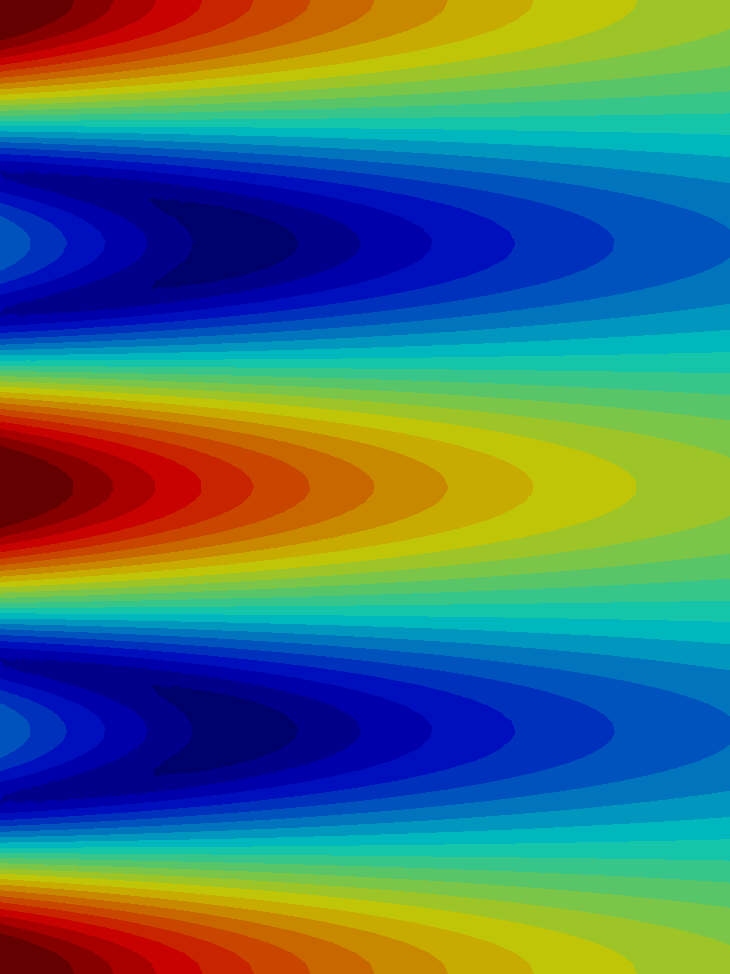}};
  \end{tikzpicture}~~~
  \begin{tikzpicture}
    \begin{axis}[
      hide axis,
      scale only axis,
      height=0pt,
      width=0pt,
      colormap/jet,
      colorbar sampled,
      point meta min=-1,
      point meta max=0.5,
      colorbar style={ samples=24,
        width=0.3cm, height = 5.2cm,
        ytick={-1,-0.25,0.5},
        yticklabel style={style={font=\footnotesize}},
               yticklabel pos=right
      }]
      \addplot [draw=none] coordinates {(0,0)};
    \end{axis}
     \node[] (numex1) at (-2.6,-2.6)  {\includegraphics[width=0.3\textwidth]{./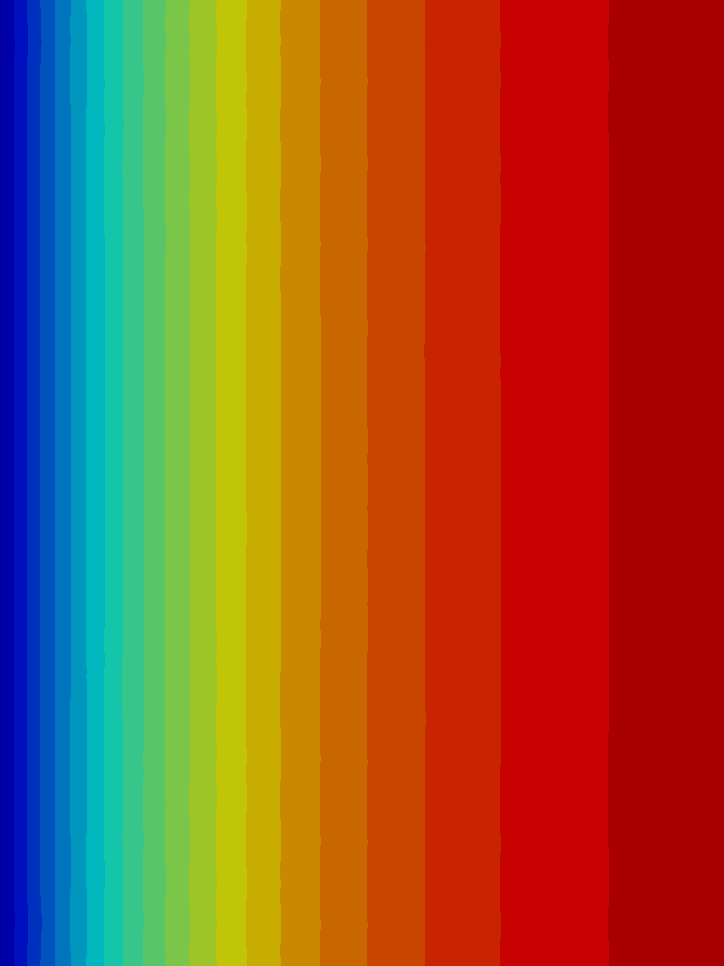}};
    \end{tikzpicture}
  \caption{Absolute value $|u|$ (left)  of the velocity and the pressure $p$ (right) of the Kovasznay flow.} \label{fig:kovflow}
\end{figure}

\begin{table}[h]
  \begin{center}
    
    \begin{tabular}{r|cccc}
      $k$ & $|| \nabla u - \nabla \VelVar||_0$ & $|| \nabla u - \nabla \Recon \VelVar ||_0$ & $|| \nabla u - \nabla \VelVar^{\text{\tiny PR}}||_0$ & $|| \nabla u - \nabla \Recon \VelVar^{\text{\tiny PR}}||_0$ \\
 \hline 
2& \num{2.6581299523 } & \num{2.830316592}&\num{2.69810607009}&\num{2.9357419845}\\
3& \num{0.807400448764 } & \num{0.841646589891}&\num{0.820934210003}&\num{0.853967483295}\\
4& \num{0.200196549669 } & \num{0.218597690215}&\num{0.20237073394}&\num{0.225071855286}\\
5& \num{0.0409342550448 } & \num{0.0419499990696}&\num{0.0412046193074}&\num{0.0423915122718}\\
6& \num{0.00661446544404 } & \num{0.00703342438063}&\num{0.00665393403276}&\num{0.00721819563377}\\
7& \num{0.00101618417785 } & \num{0.0010369670393}&\num{0.00101762847831}&\num{0.00104085612001}\\
8& \num{0.000120437668782 } & \num{0.000125834707604}&\num{0.000120695584087}&\num{0.000127823023455}\\
9& \num{1.47705659247e-05 } & \num{1.51060003591e-05}&\num{1.47789887069e-05}&\num{1.51372752265e-05}\\
10& \num{1.38482315337e-06 } & \num{1.4332283199e-06}&\num{1.38576732395e-06}&\num{1.44587911638e-06}\\
11& \num{1.41073093681e-07 } & \num{1.44927352908e-07}&\num{1.41115375091e-07}&\num{1.45121171261e-07}\\
12& \num{1.09683689835e-08 } & \num{1.13139588944e-08}&\num{1.09698499704e-08}&\num{1.13660949451e-08}\\
13& \num{9.70581010023e-10 } & \num{1.00464616657e-09}&\num{9.71140571392e-10}&\num{1.00599540477e-09}\\
14& \num{2.84850049314e-10 } & \num{2.84570188825e-10}&\num{2.84345543683e-10}&\num{2.84122627016e-10}
    \end{tabular}\vspace*{0.1cm}    
\caption{Comparison of the broken $H^1$ (semi) norm error for the Kovasznay flow and different orders $k=2,\dots,14$.} \label{table::polyrobrec}
\end{center}
\end{table}

\begin{figure}[h]
  \begin{center}
    \pgfplotstableread{plots/polyrobrec/conv.out} \conv
\pgfplotstableread{plots/polyrobrec/conv_hodc.out} \convhodc
\pgfplotstableread{plots/polyrobrec/conv_hodc_rec.out} \convhodcrec
\pgfplotstableread{plots/polyrobrec/conv_hodc_rec_solrec.out} \convhodcrecsolrec
\pgfplotstableread{plots/polyrobrec/conv_hodc_solrec.out} \convhodcsolrec

\definecolor{myblue}{RGB}{62,146,255}
\definecolor{mygreen}{RGB}{22,135,118}
\definecolor{myred}{RGB}{255,145,0}

\begin{tikzpicture}
  [
  scale=1
  ]
  \begin{axis}[
    name=plot1,
    legend entries={$\VelVar$, $\Recon (\VelVar)$},                             
    scale=0.75,
    legend style={text height=0.7em },
    legend style={draw=none},
    style={column sep=0.1cm},
    xlabel=\texttt{ndof},
    xmode=log,
    ymax=10,
    ymin=1e-10,
    ymode=log,
    ytick = {1e-10,1e-8,1e-6,1e-4,1e-2,1e0},
    y tick label style={
      /pgf/number format/.cd,
      fixed,
      precision=2
    },
    x tick label style={
      /pgf/number format/.cd,
      fixed,
      precision=2
    },
    %
    xmin = 3e2,
    xmax = 1e4,
    grid=both,
    legend style={
      cells={align=left},
      at={(1.05,0.72)},
      anchor = north west
    },
    ]
  
    \addplot[line width=0.5pt, color=myblue, mark=triangle] table[x=0, y=1]{\convhodc};
    \addplot[line width=0.5pt, color=myred, mark=square] table[x=0, y=1]{\convhodcsolrec};

    

  \end{axis}
\end{tikzpicture}
  \end{center}
  \caption{The broken $H^1$ (semi) norm error for the Kovasznay flow and different orders $k=2,\dots,14$.} \label{fig:polyrobrec}
\end{figure}
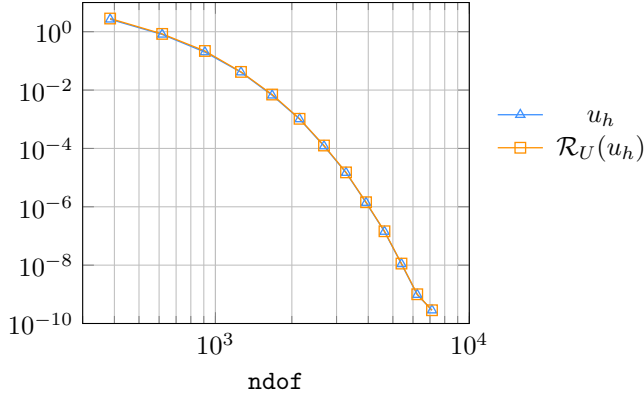

\subsection{2D Sch\"afer-Turek Navier--Stokes example ($k$-convergence)} \label{sec:twodbenchmark}
We consider the two dimensional benchmark problem ``2D-2Z'' given in \cite{schafer1996benchmark} where a laminar flow around a cylinder is described. The domain is given by
\begin{align*}
  \Omega := [0,2.2] \times [0,0.41] \setminus \{||(x,y) - (0.2,0.2)||_2 \leq 0.05 \}.
\end{align*}
The boundary is decomposed into $\Gamma_{in}, \Gamma_{out},\Gamma_{w}$ describing the inflow, outflow and wall boundary respectively. On $\Gamma_{in} : = \{x=0\}$ we assume non homogeneous Dirichlet boundary conditions in normal direction given by
\begin{align*}
  u(0,y,t) = u_{in} = 6 y(0.41-y) e_x,
\end{align*}
where $e_x$ is the unit vector in $x$-direction. On $\Gamma_{out}:=\{x = 2.2\}$ we prescribe natural boundary conditions $(-\nu \nabla u - pI) \cdot n = 0$, and on $\Gamma_{w}:=\partial \Omega \setminus (\Gamma_{in} \cup \Gamma_{out})$ homogeneous Dirichlet, thus no slip boundary conditions. The viscosity is fixed to $\nu = 10^{-3}$ resulting in a moderate Reynolds number $Re = 100$. In Figure \ref{fig:twodbenchmark} we see a numerical solution at $t = 8$ where we can observe the unsteady vortex street behind the cylinder. For the discretizations we use a fixed mesh with $|\mathcal{T}| = 550$ elements and different polynomial orders. In order to compare our results we consider the convergence of the (maximal and minimal) drag and lift coefficients on the cylinder $\Gamma_\circ := \partial\{||(x_1,x_2) - (0.2,0.2)||_2 \leq 0.05 \}$ given by
\begin{align*}
c_D := \int_{\Gamma_\circ} \left( \nu \frac{\partial u }{ \partial n} - pn \right) \cdot e_x \, ds, \quad c_L := \int_{\Gamma_\circ} \left( \nu \frac{\partial u }{ \partial n} - pn \right) \cdot e_y \, ds.
\end{align*}
For the time discretization we used a second order diagonal Runge Kutta IMEX scheme similar to the first order version described in Section \ref{sec::imex} with a time step $\Delta t = 5 \cdot 10^{-4}$. Looking at Tables  \ref{table::dragvals} and \ref{table::liftvals} we can make several observations. In the first two columns the drag and lift coefficients are given for a fully $H(\divergence)$-conforming discretization $\VelVar^{\text{\tiny n,t}}$ (see introduction of Section \ref{sec:numex}). All values show a rapid convergence with respect to the polynomial order and are as accurate as the values presented in \cite{featflow} (where a lot more degrees of freedom are used). In the third and fourth columns the values for the discretization \eqref{eq:semidiscrete} are presented. In the lowest order case this method was not stable.
However, increasing the polynomial order induces a proper convergence and a high accuracy is achieved. For the discretization \eqref{eq:semidiscrete:energystable} we can make similar conclusions as with discretization \eqref{eq:semidiscrete}. The last two columns presents the values of discretization \eqref{eq:semidiscrete:enstabprob}. This method shows a similar behavior as the previous ones. When we compare for example $\max_{} {c_D}$ for $k=4$ we only see a difference of $10^{-4}$, achieved with a method that was just as expensive, with respect to computational costs of the linear solver, as the $H(\divergence)$-conforming method with order $k=3$, see \cite[Section 5.2]{ledlehrschoe2017relaxedpartI}. Finally we want to mention that discretization \eqref{eq:semidiscrete:pressurerobust} was not stable. This might be due to the lost skew--symmetry
  \begin{align*}
    \bilinearform{C}({\HdivVar};\VelVar,\Recon(\VelTest))\! \neq - \bilinearform{C}({\HdivVar};\Recon(\VelVar),\VelTest)
  \end{align*}
  and the usage of only a discrete divergence free advection velocity ${\HdivVar}$ with discontinuities in the highest order normal modes across edges.

\begin{figure}
  \centering
  \begin{tikzpicture}
    \begin{axis}[
      hide axis,
      scale only axis,
      height=0pt,
      width=0pt,
      colormap/jet,
      colorbar sampled,
      colorbar horizontal,
      point meta min=0,
      point meta max=2.2,      
      colorbar style={ samples=24,
        width=10cm, height = 0.3cm,
        xtick={0,1.1,2.2},
        xticklabel style={style={font=\footnotesize}},
        xticklabel pos=top
      }]
      \addplot [draw=none] coordinates {(0,0)};
    \end{axis}
    \node[] (numex1) at (5,-2.4)  {\includegraphics[width=0.95\textwidth]{./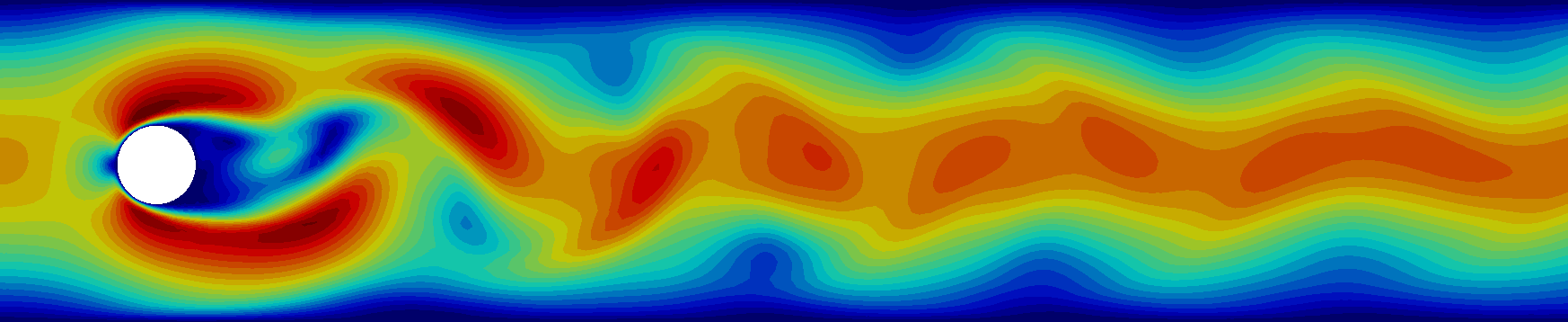}};
  \end{tikzpicture}
  \caption{Absolute value $|u|$ of the velocity solution of problem ``2D-2Z'' in \cite{schafer1996benchmark}  at $t = 8$.} \label{fig:twodbenchmark}
\end{figure}

\begin{table}[h]
  \begin{center}
    
    \begin{tabular}{rcccccccc}
      \toprule
      & \multicolumn{2}{c}{$\VelVar^{\text{\tiny n,t}}$} & \multicolumn{2}{c}{$\VelVar^{\text{\tiny a}}$}  & \multicolumn{2}{c}{$\VelVar^{\text{\tiny c}}$} & \multicolumn{2}{c}{$\VelVar^{\text{\tiny d}}$}\\
      $k$ & $\max c_D  $  &$\min c_D  $  & $\max c_D  $  & $\min c_D $ & $\max c_D  $  &$\min c_D  $ & $\max c_D  $  &$\min c_D  $ \\
      \midrule
1& \numcoef{3.272354620864635}& \numcoef{3.173329630161397}& --& --& \numcoef{3.0427533256721553}& \numcoef{3.0423760209293955}& \numcoef{2.392165775239509}& \numcoef{2.3920306957234465}\\
2& \numcoef{3.2704869519465958}& \numcoef{3.193282814637561}& \numcoef{3.1656740590101213}& \numcoef{3.12686472465301}& \numcoef{3.1884018015495763}& \numcoef{3.1444649910965317}& \numcoef{3.132620938678973}& \numcoef{3.096187707346747}\\
3& \numcoef{3.2188912760626067}& \numcoef{3.15947166879233}& \numcoef{3.219639140143722}& \numcoef{3.161948715514154}& \numcoef{3.209914627979554}& \numcoef{3.1536728647512025}& \numcoef{3.2115085124551213}& \numcoef{3.153841503968672}\\
4& \numcoef{3.2253379987207085}& \numcoef{3.162960053541785}& \numcoef{3.2183297441911884}& \numcoef{3.15877362452363}& \numcoef{3.2231290792229057}& \numcoef{3.1617798239798627}& \numcoef{3.2252058715671064}& \numcoef{3.16282957398733}\\
5& \numcoef{3.2273929415273788}& \numcoef{3.1643015574013185}& \numcoef{3.2276251922932975}& \numcoef{3.1644965897593984}& \numcoef{3.2269977836487813}& \numcoef{3.1641450569172473}& \numcoef{3.2270145926663774}& \numcoef{3.164034686747668}\\
6& \numcoef{3.2276789266281503}& \numcoef{3.1644479158616075}& \numcoef{3.2274141452472}& \numcoef{3.1643527848544197}& \numcoef{3.2275493306204908}& \numcoef{3.1644082259855906}& \numcoef{3.227509362318794}& \numcoef{3.164333699168589}\\
7& \numcoef{3.22775144228471}& \numcoef{3.164472992669359}& \numcoef{3.2278719529139273}& \numcoef{3.1645578601644737}& \numcoef{3.227771356036888}& \numcoef{3.164498050049582}& \numcoef{3.2277092410514694}& \numcoef{3.164450154595573}\\
      \bottomrule
    \end{tabular}
        \vspace{0.3cm}
\caption{Drag coefficients for an $H(\divergence)$-conforming method and the discretizations \eqref{eq:semidiscrete}, \eqref{eq:semidiscrete:energystable}, \eqref{eq:semidiscrete:enstabprob}.} \label{table::dragvals}
\end{center}
\end{table}

\begin{table}[h]
  \begin{center}
    
          \begin{tabular}{rcccccccc}
      \toprule
      & \multicolumn{2}{c}{$\VelVar^{\text{\tiny n,t}}$} & \multicolumn{2}{c}{$\VelVar^{\text{\tiny a}}$}  & \multicolumn{2}{c}{$\VelVar^{\text{\tiny c}}$} & \multicolumn{2}{c}{$\VelVar^{\text{\tiny d}}$}\\
      $k$ & $\max c_L  $  &$\min c_L  $  & $\max c_L  $  & $\min c_L $ & $\max c_L  $  &$\min c_L  $ & $\max c_L  $  &$\min c_L  $ \\
      \midrule
      1& \numcoef{1.2427806698919293}& \numcoef{-1.280704146703924}& --& --& 0& \numcoef{-0.03436381728064106}& \numcoef{0}& \numcoef{-0.08916263103568066}\\
2& \numcoef{1.069153975372104}& \numcoef{-1.1209081781389736}& \numcoef{0.6728182181249825}& \numcoef{-0.7853480579972381}& \numcoef{0.7411666304020226}& \numcoef{-0.8419816798221176}& \numcoef{0.7523893000316002}& \numcoef{-0.7770176481064238}\\
3& \numcoef{0.9610190316541938}& \numcoef{-0.9911931134383309}& \numcoef{0.9272682610491969}& \numcoef{-0.9678904010974634}& \numcoef{0.937293843886644}& \numcoef{-0.9638796977984909}& \numcoef{0.9450170180406212}& \numcoef{-0.9712646683468318}\\
4& \numcoef{0.9799263053522335}& \numcoef{-1.0141247600800907}& \numcoef{0.9548963934930215}& \numcoef{-0.9858432945067287}& \numcoef{0.9733005981044228}& \numcoef{-1.00620035198305}& \numcoef{0.9814421526472067}& \numcoef{-1.014914225690585}\\
5& \numcoef{0.9848951178081753}& \numcoef{-1.0197121929554316}& \numcoef{0.9853979721335158}& \numcoef{-1.0195204426387032}& \numcoef{0.9840253280530481}& \numcoef{-1.0183128383481672}& \numcoef{0.9840456097321706}& \numcoef{-1.018704334921469}\\
6& \numcoef{0.9862220060999856}& \numcoef{-1.0208000726273772}& \numcoef{0.9852827733007115}& \numcoef{-1.0192416736086714}& \numcoef{0.9858586338221724}& \numcoef{-1.020190853329579}& \numcoef{0.9858633334589642}& \numcoef{-1.0203217740503236}\\
7& \numcoef{0.9864081781878785}& \numcoef{-1.0211249264219395}& \numcoef{0.9867547163020531}& \numcoef{-1.021357001157976}& \numcoef{0.9864817567482875}& \numcoef{-1.0211480315441204}& \numcoef{0.9863053622335223}& \numcoef{-1.02100993214682}\\
      \bottomrule
    \end{tabular}
        \vspace{0.3cm}
\caption{Lift coefficients for the discretizations \eqref{eq:semidiscrete}, \eqref{eq:semidiscrete:energystable}, \eqref{eq:semidiscrete:enstabprob} and an $H(\divergence)$-conforming method.} \label{table::liftvals}
\end{center}
\end{table}

\subsection{A Navier--Stokes example: ``Planar lattice flow''} \label{sec:latticeflow}

In this example we want to make a qualitative comparison between different numerical solutions of a Navier--Stokes problem. We consider a ``planar lattice flow'' given by four vortices which are rotating in opposite directions on fixed positions on $\Omega = [0,1]^2$. We assume periodic boundary conditions, a zero right hand side $\ForceVar = 0$,  and an initial velocity
\begin{align*}
\VelVarEx(x,y,t=0) = \VelVarEx_0(x,y) = \begin{pmatrix}
\sin(2\pi x)\sin(2\pi y) \\ \cos(2\pi x)\cos(2\pi y) 
\end{pmatrix}.
\end{align*}
The exact solution is given by $ \VelVarEx(x,y,t) = \VelVarEx_0(x,y) e^{-8 \pi^2 \nu t}$. We choose a small viscosity of $\nu = 10^{-6}$ such that the convective terms are dominating. In Figure \ref{fig:lattice} the initial velocity is plotted. This problem is interesting because small perturbations result in very chaotic flow fields due to the periodic boundary conditions and the saddle point character of the start values, see \cite{majda2002}. 
In Figure \ref{fig:latticeenergy} we compare the time evolution of the $L^2$ norm of the resulting flow fields for the discretizations \eqref{eq:semidiscrete}, \eqref{eq:semidiscrete:pressurerobust},  \eqref{eq:semidiscrete:energystable} and \eqref{eq:semidiscrete:enstabprob} using the same second order diagonal Runge Kutta IMEX scheme as in Section \ref{sec:twodbenchmark} with a time step $\Delta t = 1 \cdot 10^{-4}$, polynomial orders $k = 4,6$ and two different meshes with mesh size $h \approx 0.1$ resulting in $|\mathcal{T}| = 212$ and $h \approx 0.05$ resulting in $|\mathcal{T}| = 924$. Note that we have chosen such a small time step to neglect errors caused by the time discretization. To validate this behavior several tests with smaller time steps were performed and lead to (essentially) the same results. Further note that we used unstructered meshes, thus we do not exploit the saddle-point structure of the flow. Similar to Example \ref{sec:twodbenchmark}, the semi-discrete method \eqref{eq:semidiscrete:enstabprob} is the most accurate compared to the fully $H(\divergence)$-conforming method. Methods \eqref{eq:semidiscrete},\eqref{eq:semidiscrete:pressurerobust} and \eqref{eq:semidiscrete:energystable} are stable but result in big errors quite early in time. The behavior of the error is consistent with the observations in \cite{Schroeder2017}. 

\begin{figure}
  \centering
  \begin{tikzpicture}
    \begin{axis}[
      hide axis,
      scale only axis,
      height=0pt,
      width=0pt,
      colormap/jet,
      colorbar sampled,
      colorbar horizontal,
      point meta min=0,
      point meta max=1,      
      colorbar style={ samples=24,
        width=10cm, height = 0.3cm,
        xtick={0,0.5,1},
        xticklabel style={style={font=\footnotesize}},
        xticklabel pos=top
      }]
      \addplot [draw=none] coordinates {(0,0)};
    \end{axis}
    \node[] (numex2_1) at (1,-4.5)  {\includegraphics[width=0.5\textwidth]{./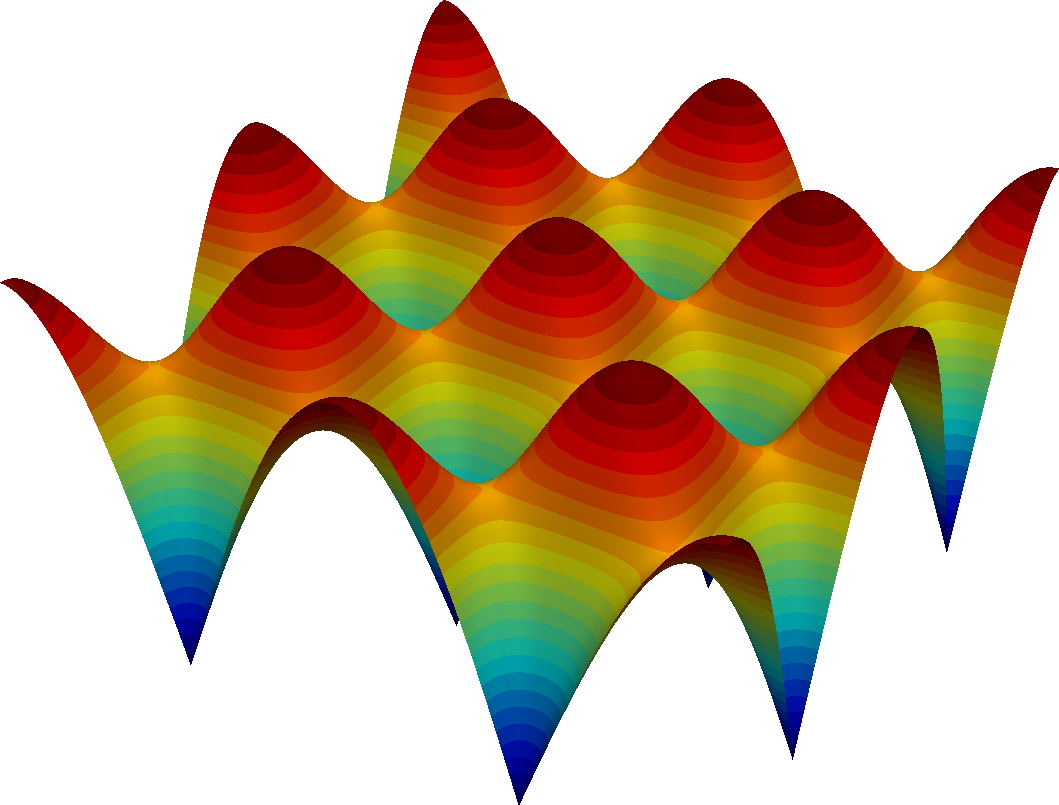}};
    \node[] (numex2_2) at (8.8,-4.5)  {\includegraphics[width=0.4\textwidth]{./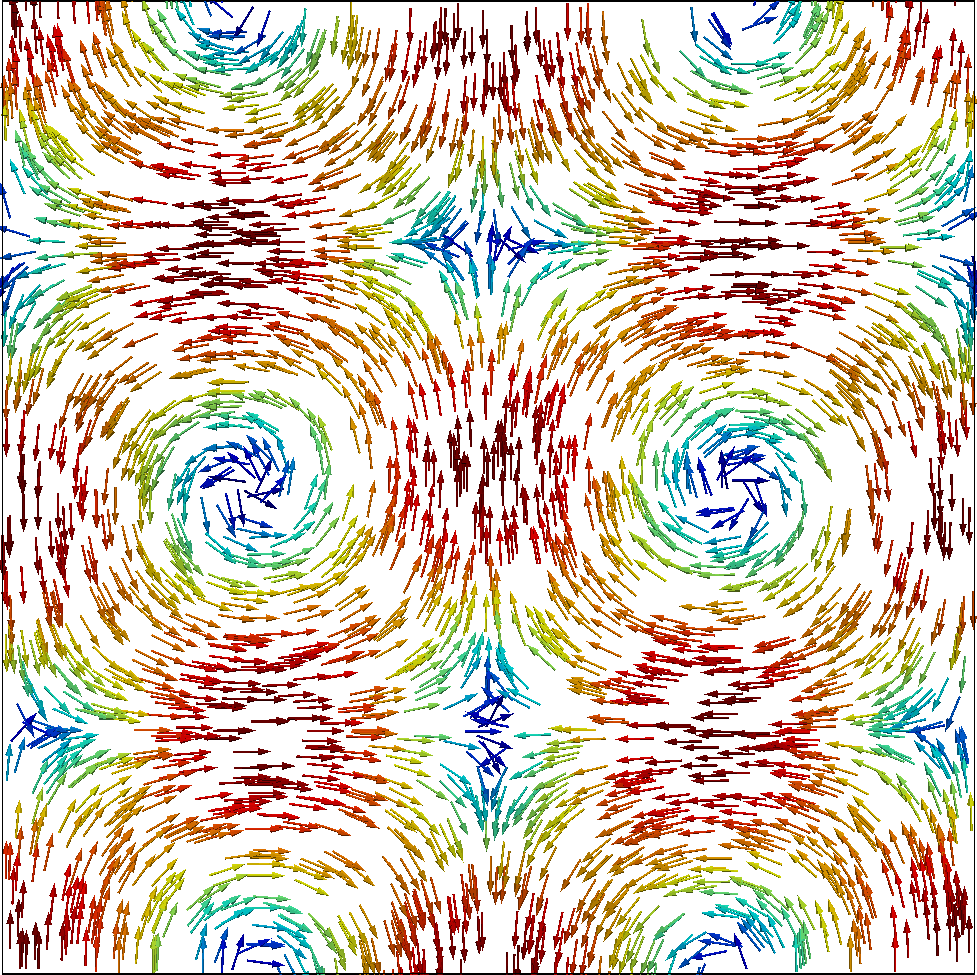}};
  \end{tikzpicture}
  \caption{Absolute value of the initial velocity $|u_0|$} \label{fig:lattice}
\end{figure}

\begin{figure}[h]
  \begin{center}
    \includegraphics{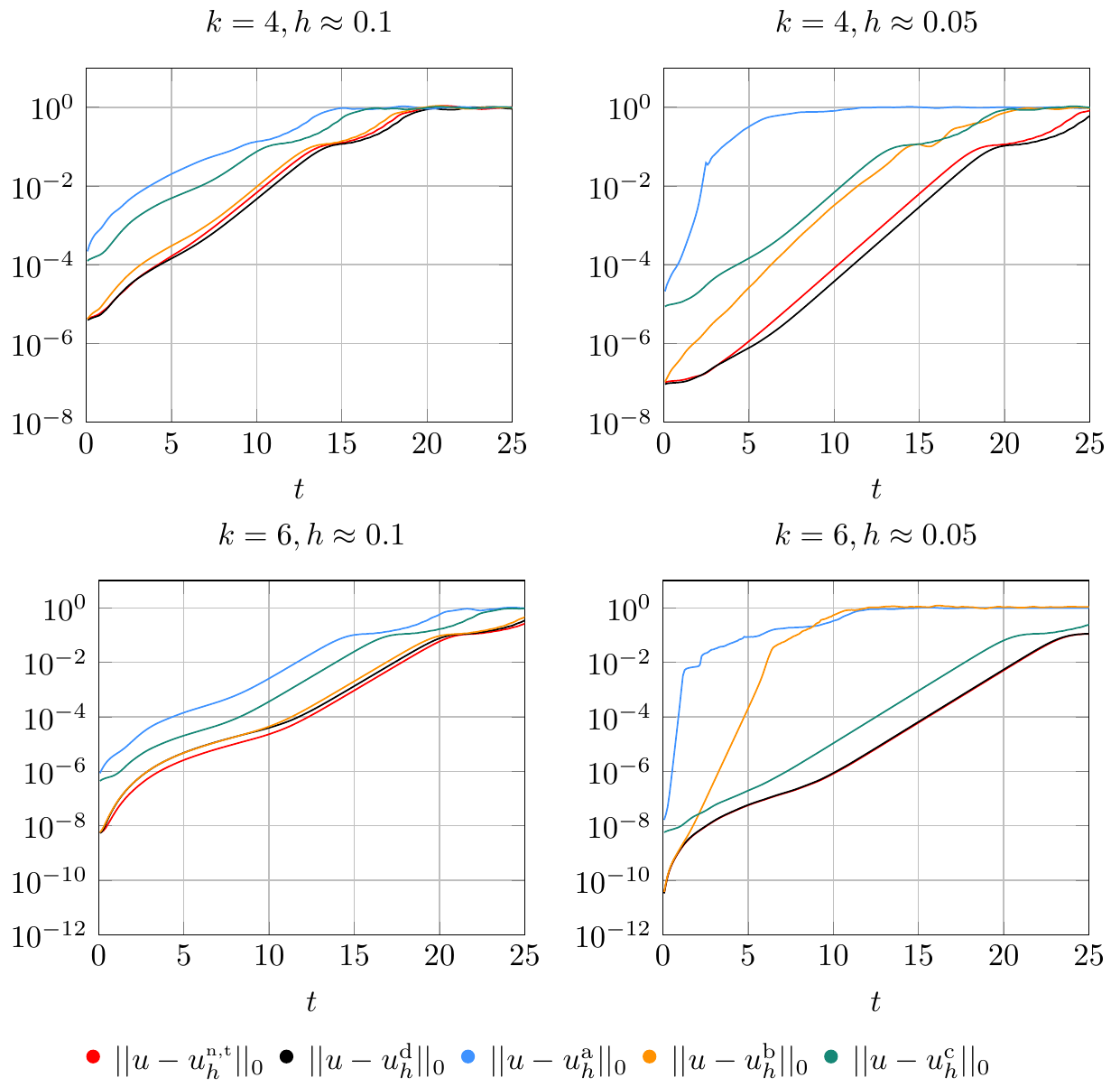}
  \end{center}
  \caption{Time evolution of the $L^2$ norm for the lattice flow problem.} \label{fig:latticeenergy}
\end{figure}

\newpage
~
\newpage
\bibliographystyle{plain}
\bibliography{literature}

\end{document}